\newtheorem{theorem}{Theorem}
\newtheorem{lemma}{Lemma}[section]
\newtheorem{proposition}{Proposition}[section]
\newtheorem{corollary}{Corollary}[section]
\theoremstyle{definition}
\newtheorem{define}{Definition}
\title{Strongly Regular Graphs With The $7$-Vertex Condition}
\author{Sven Reichard\thanks{Partially supported by the School of
    Mathematics and Statistics at the University of Western
    Australia}\\Institut f\"ur Algebra\\Technische Universit\"at Dresden} 
\begin{document}
\maketitle
\begin{abstract}
The $t$-vertex condition, for an integer $t\ge 2$, was introduced by
Hestenes and Higman in 1971, providing a combinatorial invariant
defined on edges and non-edges of a graph. Finite rank 3 graphs
satisfy the condition for all values of $t$. Moreover, a long-standing
conjecture of M.~Klin asserts the existence of an integer $t_0$ such
that a graph satisfies the $t_0$-vertex condition if and only if it is
a rank 3 graph.

We construct the first infinite family of non-rank 3 
strongly regular graphs satisfying
the $7$-vertex condition. This implies that the Klin parameter $t_0$
is at least 8. The examples are the point graphs of a certain family
of generalised quadrangles.
\end{abstract}

\section{Introduction}

Strongly regular graphs (see Section~\ref{sec:srg}) occur naturally as
rank 3 representations of 
finite permutation groups, and there are many other examples. 
Whereas in principle all finite rank 3 graphs are known (see, e.g.,
\cite{KaL82}, \cite{LiS86}), the more general problem of 
classifying  the finite strongly regular graphs appears completely
hopeless. Hence, it is natural to consider properties of graphs which
are satisfied by the rank 3 graphs and also some other, but not all,
strongly regular graphs.

Hestenes and Higman \cite{HesH71}
introduced a family of
 such properties 
of this type that are called the $t$-vertex condition
for integers $t\ge 2$. These conditions are described in detail in 
Section~\ref{sec:t-vertex}. For a graph $\Gamma$ and
integer $t\ge 3$ the $t$-vertex condition leads to
 a combinatorial invariant on pairs of distinct vertices of
$\Gamma$; if 
this invariant is constant on the edges, we say that $\Gamma$
satisfies the $t$-vertex condition on edges, and similarly for
non-edges. 

The $2$-vertex condition is equivalent to  regularity of graphs, while
the $3$-vertex 
condition is equivalent to strong regularity.
Also for any $t\ge 3$, if a graph satisfies the $t$-vertex condition
then it also satisfies the $(t-1)$-vertex condition (see
Section~\ref{sec:t-vertex}). 
% For larger values of
%$t$ the
%$t$-vertex 
%condition gets more and more restrictive.
 In a rank 3 graph, all
edges and all non-edges are equivalent under the automorphism group,
and hence cannot be distinguished combinatorially, therefore these
graphs satisfy the $t$-vertex condition for any $t$.

There is a long-standing conjecture by M.~Klin \cite{FarKliMuz94b}
that there exists  a
number $t_0$ such that the only graphs satisfying the $t_0$-vertex condition
are the rank 3 graphs. Since for any fixed $t$ the
$t$-vertex condition can be checked in polynomial time (see Theorem~\ref{thm:polynomial}), a proof of
Klin's conjecture would have the remarkable consequence that rank 3
graphs can be recognised combinatorially in polynomial time.

Hence it is interesting to consider graphs which are not rank 3
graphs, but which satisfy the $t$-vertex condition for large $t$. 

\begin{enumerate}
\item
For $t=2$ it is easy to find regular graphs which are not strongly
regular, and hence not rank 3 graphs; a small  example being the cycle
$C_6$. 

\item
The smallest strongly regular graph ($t=3$) which is not a rank 3
graph is the well-known Shrikhande graph on 16 vertices, which can be
constructed from a Latin square of order $4$.

\item 
In 1971, Higman \cite{Hig71} gave the first examples for
$t=4$. He showed 
that the point graphs of generalised quadrangles (see
Section~\ref{sec:gq}) satisfy the 4-vertex condition. The smallest
example of a generalised quadrangle which does not admit a rank 3 group
is the unique $GQ(5,3)$ on 96 points.

\item 
In the late 1980's Ivanov \cite{Iva89} constructed the first graph with the 5-vertex
condition that was not  a rank $3$ graph. It has the parameters of a Latin
square graph $L_8(16)$ on $256$ vertices. Its first and second
sub-constituents 
(subgraphs induced by the neighbours and non-neighbours, respectively,
of a given 
vertex) of orders $120$ and $135$ are strongly regular and satisfy the
4-vertex condition. The graph on 135 vertices was the first example
known to satisfy the $4$-vertex condition and have
 an intransitive automorphism group.

\item 
Later,  Brouwer, Ivanov,  and Klin \cite{BroIvaKli89} generalised the
construction above,
obtaining an infinite series of graphs of order $4^k$
with the 4-vertex condition. These graphs in fact satisfy the
5-vertex condition as was shown by the author in \cite{Rei00}.
\end{enumerate}
For more historical details we refer to Section \ref{sec:history}.

Here, we will once more look at the point graphs of generalised
quadrangles. We show the following:
\begin{theorem}
\label{thm:5vc-gq}
The point graph of a generalised quadrangle satisfies the $5$-vertex
condition.
\end{theorem}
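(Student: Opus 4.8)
The plan is to verify the $5$-vertex condition directly from the axioms of a generalised quadrangle, using as a black box Higman's theorem \cite{Hig71} that the point graph already satisfies the $4$-vertex condition. Write $\Gamma$ for the point graph of a $GQ(s,t)$; it is strongly regular with $\lambda = s-1$ and $\mu = t+1$. I will use the standard reformulation of the $t$-vertex condition: it is enough to show that for every graph $S$ on the vertex set $\{1,\dots,5\}$ with a distinguished ordered pair $(1,2)$ (and it suffices to treat connected $S$), the number $N_S(u,v)$ of injective maps $\varphi\colon\{1,\dots,5\}\to V(\Gamma)$ that are isomorphisms onto induced subgraphs of $\Gamma$ with $\varphi(1)=u$ and $\varphi(2)=v$ is constant as $\{u,v\}$ ranges over all edges, and, separately, constant as $\{u,v\}$ ranges over all non-edges. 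Since $\Gamma$ already satisfies the $4$-vertex condition, the behaviour of any four of the five vertices is under control; the content of the theorem is in how a fifth vertex attaches, so the heart of the proof is a counting lemma for configurations of at most four points.

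I would first isolate the relevant geometry. The key facts are: (i) three pairwise collinear points of a GQ lie on a common line, so every clique of $\Gamma$ is a set of collinear points, and dually three pairwise concurrent lines are concurrent (there is no ``triangle of lines''); (ii) the $t+1$ common neighbours of a non-edge are pairwise non-collinear (a \emph{trace}), and likewise any two common neighbours of a triad are non-collinear; (iii) the subgraph induced on $N(u)$ is, for every vertex $u$, the fixed vertex-transitive rank $3$ graph $(t+1)\cdot K_s$. From (i)--(ii), a set $X$ of at most four points has a line structure entirely determined by its collinearity graph, with one exception: when that graph is a pair of disjoint edges (hence two lines) one must also record whether those two lines meet in a point outside $X$. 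This ambiguity is vacuous whenever $X$ also contains an edge joining the two matched pairs (by (i)), and where it is genuine the number of configurations of each kind is still a function of $(s,t)$ and of the type of the root pair --- for instance, the pairs consisting of a line through $u$ and a line through $v$ that meet correspond bijectively to the $t+1$ points of the trace of $\{u,v\}$ when $\{u,v\}$ is a non-edge. Combining these with (iii), which makes every count ``local to a single vertex'' constant and vertex-independent, I would prove: for any configuration of at most four points and any prescribed collinearity pattern for a new point, the number of points realising the pattern is a function of $(s,t)$, of the collinearity graph of the enlarged configuration, and --- in the exceptional case --- of the recorded line-meeting data.

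Granting the lemma, Theorem~\ref{thm:5vc-gq} reduces to a finite case analysis: fix a root pair $(u,v)$ of a given type, place $w_1$ in each admissible way, then $w_2$, then count the admissible $w_3$ via the lemma, summing over $(w_1,w_2)$. Every such sum collapses to a function of $(s,t)$ because any GQ-specific quantity that can possibly enter --- essentially, for a triad among the placed vertices, the number $c$ of its common neighbours; or, for two placed lines, the number of their external meeting points --- occurs only through a first or second moment over the relevant placements, and these double-counts are parameter-determined: for example $\sum_{\text{triads}}\binom{c}{2} = N\binom{t+1}{3}$, where $N$ is the number of non-edges, because the common neighbours of any non-edge form a trace, and external meetings of placed lines contribute quantities such as $t s (s-1)^2$. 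The only real work is carrying out this analysis for each of the finitely many rooted $5$-vertex graphs and checking in every case that the residual line-meeting and triad-centre data enters harmlessly; this bookkeeping, rather than any single step, is the main obstacle. Five points are few enough that only moments up to the second order ever arise --- which is precisely why the method is limited to $t=5$ and the theorem is stated there. Finally, the whole argument applies verbatim with ``edge'' replaced by ``non-edge'' (the numerical answers change but remain functions of $(s,t)$), so $\Gamma$ satisfies the $5$-vertex condition.
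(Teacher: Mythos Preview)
Your counting lemma is false as stated for a general $GQ(s,t)$. Take a triad $\{a,b,c\}$ (three pairwise non-collinear points) as the configuration, and ask for points collinear with all three: this is the number of centres of the triad, and for $t<s^2$ it genuinely varies from triad to triad (this variability is exactly what Higman's inequality $t\le s^2$ measures). So the number of fifth points realising a prescribed pattern is \emph{not} a function of $(s,t)$, the collinearity graph, and line-meeting data alone. You seem aware of this, since in the next paragraph you say that the triad-centre count $c$ ``occurs only through a first or second moment'' and that these are parameter-determined. But the identity you quote, $\sum_{\text{triads}}\binom{c}{2}=N\binom{t+1}{3}$, is an \emph{unrooted} sum over all triads in the quadrangle; what your argument actually needs are \emph{rooted} sums such as $\sum_{w_1}c(u,v,w_1)$ or $\sum_{(w_1,w_2)}c(u,w_1,w_2)$ with the root pair $(u,v)$ held fixed and $(w_1,w_2)$ ranging over a prescribed $4$-type. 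These rooted moments can in fact be shown constant (the local structure $\Gamma_1(z)\cong (t{+}1)\cdot K_s$ is rigid enough), but there are several cases, you have not done any of them, and the four-point term $|N(u)\cap N(v)\cap N(w_1)\cap N(w_2)|$ also has to be handled. ``Bookkeeping'' here is the whole proof, and you have not begun it.

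By contrast, the paper sidesteps the triad-centre issue entirely. It invokes the reduction theorem (Theorem~\ref{thm:t-vertex} with $k=2$): since the point graph is strongly regular and already satisfies the $4$-vertex condition, one only needs to check graph types in which every additional vertex has valency at least $3$. Passing to complements, this means each of the three additional vertices has at most one non-neighbour, leaving just eight candidate types (Table~\ref{tbl:5-cond-types}). Five of them contain an induced $K_4-e$ and hence do not occur in any GQ; the remaining three (types $0$, $2a$, $3a$) are configurations of collinear points on a single line through one of the root vertices, and each is counted in one line. No triad of non-collinear points with an external centre ever appears, so the quantity that breaks your lemma simply never enters.
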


\begin{theorem}
\label{thm:gqss2}
For an integer $s$, the point graph of a generalised quadrangle of
order $(s,s^2)$ 
satisfies the $7$-vertex condition.
\end{theorem}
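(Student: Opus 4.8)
The plan is to verify the $7$-vertex condition by a combinatorial argument that is uniform in the parameter $s$. Write $\Gamma$ for the point graph of a generalised quadrangle $\mathcal{Q}=GQ(s,s^2)$, so that $\Gamma$ is strongly regular with $\lambda=s-1$ and $\mu=s^2+1$. Since the $t$-vertex condition implies the $(t-1)$-vertex condition, and since $\Gamma$ already satisfies the $5$-vertex condition by Theorem~\ref{thm:5vc-gq}, it is enough to establish the $7$-vertex condition; the $6$-vertex condition then follows as well. Concretely, for every isomorphism type $T$ of a graph on $7$ vertices carrying an ordered distinguished pair $(x_0,y_0)$, and every relation $\rho\in\{\text{adjacent},\text{non-adjacent}\}$ on $\{x_0,y_0\}$ compatible with $T$, I would show that the number of embeddings of $T$ into $\Gamma$ sending $(x_0,y_0)$ to a prescribed pair in relation $\rho$ does not depend on that pair. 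Using Theorem~\ref{thm:5vc-gq} one may invoke all $\le 5$-vertex counts en route and need only treat the types that do not already follow from it.

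The geometric toolkit I would use has three layers. First, the quadrangle axioms force every triangle of $\Gamma$ to be a collinear triple of points; equivalently $\Gamma(x)\cong(s^2+1)\,K_s$ for each vertex $x$, the cliques being the lines through $x$ with $x$ deleted. Hence a vertex adjacent to two adjacent vertices $u,v$ lies on the unique line $uv$ and is one of its $s-1$ remaining points, and any point off $uv$ is collinear with at most one point of $uv$. Second, for a non-edge $\{x,y\}$ the set of common neighbours is a coclique of size $\mu=s^2+1$ meeting every line through $x$, and every line through $y$, in exactly one point. Third --- and this is the only place the hypothesis $t=s^2$ enters --- it is classical that in a $GQ(s,s^2)$ every triad (three pairwise non-adjacent vertices) has exactly $s+1$ common neighbours; equivalently, every second subconstituent of $\Gamma$ (the subgraph induced on the non-neighbours of a vertex), which here has $s^4$ vertices, is again strongly regular.

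The counting engine is to build each embedding one vertex at a time. Fixing a convenient order on the vertices of $T$, one places $z_1,z_2,\dots$ in turn and at each step counts the vertices of $\Gamma$ having the exact adjacency pattern to the already-placed vertices that $T$ prescribes. If the new vertex must be adjacent to two already-adjacent vertices it is pinned to a line and there are at most $s-1$ candidates, whose remaining adjacencies are essentially forced by the first layer; if it must be adjacent to at most two already-placed vertices the count is produced by strong regularity together with the second layer; and if it must be a common neighbour of three or more pairwise non-adjacent already-placed vertices the third layer supplies the total $s+1$, which is then cut down by the remaining constraints. The aim is to show that for each of the finitely many $7$-vertex types these step-counts evaluate to one and the same polynomial in $s$ for all starting pairs in relation $\rho$.

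The hard part will be precisely the appeal to the third layer in the presence of further vertices, and this is where the hypothesis really bites. The bare statement ``a triad has $s+1$ centres'' does not by itself make an extension count depend only on the abstract type: when the triad $\{a,b,c\}$ is accompanied by a fourth, fifth, \dots\ vertex $d$ to which the new centre has a prescribed relation, one has to know how many of the $s+1$ centres of $\{a,b,c\}$ are collinear with $d$, how they distribute among the lines through $d$, how they meet the $\mu$-coclique of a non-edge such as $\{d,a\}$, and so on. Establishing the needed refinements of triad-regularity in \emph{every} $GQ(s,s^2)$, by playing the line geometry against the triad structure, and verifying that they close all counts up to $7$ vertices, is the heart of the matter. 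It is also why the result is sharp in spirit: at $\le 5$ vertices such triad--centre interactions either do not arise or are absorbed by subtler aggregate cancellations --- this is exactly Theorem~\ref{thm:5vc-gq}, valid for all generalised quadrangles --- whereas at $8$ vertices one would need regularity of $4$-element sets of pairwise non-collinear points, which fails in general, matching the fact that only $t_0\ge 8$ is obtained.
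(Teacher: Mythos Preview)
Your proposal identifies the right ingredients --- the line structure, the $K_4-e$ exclusion, and triad regularity ($s+1$ centres in a $GQ(s,s^2)$) --- but it is a plan rather than a proof, and the plan as stated has a genuine gap. You correctly flag as ``the heart of the matter'' the need for \emph{refinements} of triad regularity: given a triad $\{a,b,c\}$ and a further vertex $d$, how many of the $s+1$ centres are collinear with $d$? That is a question about the valency of a $4$-element set, i.e.\ a $4$-isoregularity question, and $GQ(s,s^2)$ are \emph{not} $4$-isoregular in general (this is precisely what underlies the failure of the $8$-vertex condition you mention). So the vertex-by-vertex counting you describe will, for some $7$-vertex types, produce step-counts that are not functions of $s$ alone. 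You do not explain how those counts are nevertheless forced, and there is no indication of a mechanism that would do so.

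The paper's proof avoids this obstacle entirely by a structural contradiction argument rather than direct enumeration. The key reduction you are missing is that $3$-isoregularity, combined with Theorem~\ref{thm:t-vertex}, lets one assume that in a minimal bad type $(T,x_0,y_0)$ every additional vertex has valency at least~$4$ in $T$. Passing to the subgraph $S$ on the $t_0-2$ additional vertices, this forces minimum valency $\ge 2$ in $S$; the $K_4-e$ exclusion then bounds how $S$ can meet its own cliques and forces the valency-$2$ vertices to induce a clique or a coclique. One handles the case that $S$ is complete by a uniform line count, and otherwise shows $S$ can contain no $(t_0-4)$-clique, whence $t_0\ge 7$; finally for $t_0=7$ the constraints on a $5$-vertex $S$ (no triangle, minimum valency $2$, at most one valency-$2$ neighbour for any valency-$3$ vertex when $x\not\sim y$) are mutually inconsistent. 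No $7$-vertex embedding count is ever computed. Your outline would become a proof only after either carrying out this reduction or supplying, type by type, the extra combinatorial identities you allude to --- and you have done neither.
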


 Both results are best possible in the
sense that there is a generalised quadrangle of order $(5,3)$ which
does not satisfy 
the 6-vertex condition, and there is a generalised quadrangle of order
$(5,25)$ which does not satisfy the 8-vertex condition.

This provides us with an infinite family of graphs with
intransitive automorphism groups which satisfy the 7-vertex condition. 

\begin{corollary}
The constant $t_0$ in Klin's Conjecture is at least 8.
\end{corollary}

The author acknowledges a set of notes by A.~Pasini \cite{Pas91},
communicated by 
M.~Klin, which give the proof of the 5-vertex condition for
generalised quadrangles of order $(s,s^2)$. Thanks to M.~Klin for
attracting my attention  
to this problem, and for countless proposed improvements. He also
contributed essentially to the discussion in
Section~\ref{sec:history}.  Thanks also
to C.~Praeger, A.~Niemeyer and C.~Pech for helpful discussions, as
well as to
A.~Woldar for helping to polish the text.

\section{Strongly regular graphs}
\label{sec:srg}

All graphs considered in this text are finite and simple, i.e.,
undirected and without loops or multiple edges. Thus, a graph $\Gamma$
is a finite set $V=V(\Gamma)$ of vertices together with a binary
symmetric and 
anti-reflexive relation referred to as {\em adjacency}. We call $v=|V|$
the order of $\Gamma$.

If $x$ and $y$ are distinct vertices of $\Gamma$, we write $x\sim y$
if they  
are adjacent, and say that $y$ is a {\em neighbour} of $x$. Otherwise, we
call $y$ a {\em non-neighbour} of $x$, and write $x\not\sim y$.

Let $\Gamma(x)=\{y\in V|x\sim y\}$ denote the set of neighbours of $x$
in $\Gamma$. The {\em valency} of $x$ is defined as
$val(x)=|\Gamma(x)|$.
\begin{define}[Regular graph]
A graph $\Gamma$ is {\em regular} if there is a number $k$ such that
$val(x)=k$ for all vertices $x\in V$. In this case, $k$ is called the
{\em valency} of $\Gamma$.
\end{define}

\begin{define}[Strongly regular graph]
A graph $\Gamma$ is {\em strongly regular} if there are numbers $k$,
$\lambda$, $\mu$ such that
\begin{itemize}
\item $\Gamma$ is regular of valency $k$;
\item any two adjacent vertices of $\Gamma$ have exactly $\lambda$
  common neighbours, i.e., $|\Gamma(x)\cap\Gamma(y)|=\lambda$ whenever
  $x\sim y$.
\item any two distinct, non-adjacent vertices of $\Gamma$ have exactly
  $\mu$ common neighbours, i.e., $|\Gamma(x)\cap\Gamma(y)|=\mu$ whenever
  $x\not\sim y$.
\end{itemize}
In this case, the numbers $(v,k,\lambda,\mu)$ are called the {\em
  parameters} of $\Gamma$, where $v=|V|$ is its order.
\end{define}

We refer to Section \ref{sec:history} for a brief discussion of
numerical restrictions on putative parameters of $\Gamma$.

\section{Isoregular graphs}

There are several ways to generalise the concept of strong
regularity. One of them is the $t$-vertex condition, which we will
discuss in Section \ref{sec:t-vertex}. Another one is isoregularity.

Let $\Gamma=(V,E)$ be a graph, and let $S\subset V$ be a set of
vertices. We define the valency of $S$ as
\[
val(S) = \left|\bigcap_{x\in S}\Gamma(x)\right|,
\]
i.e., the number of vertices adjacent to all elements of $S$. Note
that this generalises the notion of valency since for any vertex $x$,
$val\left(\left\{x\right\}\right) = val(x)$.

\begin{define}
Let $\Gamma=(V,E)$ be a graph, and $k\ge 1$ be an integer. Suppose
that for each set $S$ of at most $k$ vertices, the valency of $S$
depends only on the isomorphism class of the subgraph of $\Gamma$
induced by $S$. Then $\Gamma$ is called {\em $k$-isoregular}.
\end{define}

\begin{proposition}\label{prop:k-1iso}
For $k > 1$, $k$-isoregularity implies $(k-1)$-isoregularity.
\end{proposition}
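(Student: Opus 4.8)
The plan is to observe that $k$-isoregularity, as defined above, is a statement quantified over \emph{all} vertex subsets of size at most $k$, and that this family of subsets already contains every subset of size at most $k-1$. So the weaker conclusion should follow with essentially no computation, just a matter of unwinding the definition.

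Concretely, I would fix a $k$-isoregular graph $\Gamma=(V,E)$ and take an arbitrary subset $S\subseteq V$ with $|S|\le k-1$. Since $k>1$, in particular $|S|\le k-1<k$, so $S$ is a set of at most $k$ vertices and the defining hypothesis of $k$-isoregularity applies to it verbatim: the number $val(S)$ is determined by the isomorphism type of the induced subgraph $\Gamma[S]$. (There is no subtlety in comparing sets of different sizes, since two induced subgraphs lying in the same isomorphism class automatically have the same number of vertices.) As $S$ was an arbitrary set of size at most $k-1$, this is precisely the assertion that $\Gamma$ is $(k-1)$-isoregular, which completes the argument.

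I do not expect a genuine obstacle here; the implication is built into the ``at most $k$'' phrasing of the definition, so the proof is purely formal. The only point worth flagging is that, had isoregularity been defined via sets of size \emph{exactly} $k$, the descent from size $k$ to size $k-1$ would no longer be automatic: one would then need a counting step — for instance relating $val(S)$ to the valencies $val(S\cup\{x\})$ of the one-vertex extensions $S\cup\{x\}$ with $x\notin S$ — and arguing that dependence on $\Gamma[S]$ alone propagates downward. With the convention adopted in the excerpt, none of this machinery is required.
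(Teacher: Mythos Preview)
Your proof is correct and matches the paper's approach exactly: both observe that the definition quantifies over sets of size at most $k$, so the condition for sets of size at most $k-1$ is already included. The paper's proof is simply a one-line version of what you wrote.
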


\begin{proof}
Follows directly from the definition. If the valency of any set $S$ of
up to $k$ elements depends only on the isomorphism class of the
induced subgraph, this holds in particular for all sets of up to $k-1$
elements. 
\end{proof}

\begin{proposition}\label{prop:comp-iso}
A graph $\Gamma$ is $k$-isoregular if and only if its complement
$\overline\Gamma$ is $k$-isoregular.
\end{proposition}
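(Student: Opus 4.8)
The plan is to reduce the statement to a single combinatorial identity. Write $\Gamma[S]$ for the subgraph of $\Gamma$ induced on a vertex set $S$, and note that $\overline{\Gamma}[S]=\overline{\Gamma[S]}$, so that complementation is a bijection between the isomorphism classes of graphs on $|S|$ vertices. Hence $val_{\overline{\Gamma}}(S)$ depends only on the isomorphism class of $\overline{\Gamma}[S]$ if and only if it depends only on the isomorphism class of $\Gamma[S]$. It therefore suffices to assume that $\Gamma$ is $k$-isoregular and to prove that, for every $S$ with $|S|\le k$, the number $val_{\overline{\Gamma}}(S)$ is determined by the isomorphism type of $\Gamma[S]$; the converse implication is then obtained by applying this to $\overline{\Gamma}$ together with $\overline{\overline{\Gamma}}=\Gamma$.

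The key step is an inclusion--exclusion identity. A vertex $y$ lies in $\bigcap_{x\in S}\overline{\Gamma}(x)$ exactly when $y\notin S$ and $y\not\sim x$ for every $x\in S$, and the set of all $y\in V$ (now allowing $y\in S$) that are non-adjacent to every element of $S$ is the disjoint union of this set with the set of isolated vertices of $\Gamma[S]$. Running inclusion--exclusion over the subsets $A\subseteq S$ with the events ``$y\sim a$'' then yields
\[
val_{\overline{\Gamma}}(S)=\sum_{A\subseteq S}(-1)^{|A|}\,val_{\Gamma}(A)\;-\;\iota\bigl(\Gamma[S]\bigr),
\]
where $val_{\Gamma}(\emptyset)=|V|$ and $\iota(\Gamma[S])$ is the number of isolated vertices of $\Gamma[S]$. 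I would first confirm the bookkeeping on the cases $|S|\le 2$, recovering in particular $v-2k+\lambda$ and $v-2k+\mu-2$ for an adjacent and a non-adjacent pair in a strongly regular graph.

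To finish, observe that each $A\subseteq S$ has $|A|\le k$, so $k$-isoregularity makes $val_{\Gamma}(A)$ equal to a value $f$ depending only on the isomorphism type of $\Gamma[A]$. Grouping the subsets of $S$ by this type gives $\sum_{A\subseteq S}(-1)^{|A|}val_{\Gamma}(A)=\sum_{T}N_T(\Gamma[S])\,(-1)^{|V(T)|}f(T)$, where $N_T(H)$ denotes the number of vertex subsets of $H$ inducing a graph isomorphic to $T$. Both $N_T(\Gamma[S])$ and $\iota(\Gamma[S])$ manifestly depend only on the isomorphism class of $\Gamma[S]$, while $f$ is a fixed function attached to the graph $\Gamma$; hence $val_{\overline{\Gamma}}(S)$ is determined by the isomorphism type of $\Gamma[S]$, as wanted. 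In keeping with the one-line proofs of the neighbouring propositions, there is no genuine obstacle here: the only points requiring a little care are the accounting for the vertices of $S$ itself in the inclusion--exclusion (the $\iota(\Gamma[S])$ correction), and the remark that the passage from the type of $\Gamma[S]$ to the multiplicities $N_T$ of the types occurring among its induced subgraphs is itself an isomorphism invariant.
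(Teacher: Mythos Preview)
Your proof is correct and follows essentially the same route as the paper: both arguments use inclusion--exclusion over the subsets $A\subseteq S$ to express $val_{\overline\Gamma}(S)$ in terms of the quantities $val_\Gamma(A)$, which by $k$-isoregularity depend only on the isomorphism types of the induced subgraphs $\Gamma[A]$. Your version is more explicit than the paper's (in particular you track the $\iota(\Gamma[S])$ correction for vertices of $S$ itself and spell out the grouping by isomorphism type), but the underlying idea is identical.
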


\begin{proof}
Let $\Gamma$ be $k$-isoregular.
Let $S$ be a set of $k$ vertices in $\Gamma$. Since we have
$i$-isoregularity for $1\le i \le k$, we know the valency of every
subset of $S$. Using the Principle of Inclusion and Exclusion, we can
determine the number of vertices not adjacent to any element of $S$,
i.e., the valency of $S$ in the complement of $\Gamma$. Since the
calculation depends only on the isomorphism class of the subgraph of
$\Gamma$ induced by $S$, and since this holds for any $k$-subset $S$, 
$\overline\Gamma$ is $k$-isoregular.
\end{proof}

\begin{proposition}
A graph $\Gamma$ is $1$-isoregular if and only if it is regular. It is $2$-isoregular
if and only if it is strongly regular.
\end{proposition}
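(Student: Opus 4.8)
The plan is to unwind the definition of $k$-isoregularity for $k=1$ and $k=2$ by enumerating the isomorphism classes of induced subgraphs on at most that many vertices, and then matching the resulting conditions against the definitions of \emph{regular} and \emph{strongly regular} graph given above.

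First I would treat the case $k=1$. A set $S$ with $|S|\le 1$ is either empty or a singleton. For $S=\emptyset$ we have $val(\emptyset)=\left|\bigcap_{x\in\emptyset}\Gamma(x)\right|=|V|=v$ under the usual convention that an empty intersection over subsets of $V$ equals $V$; this value is forced and depends on nothing but the (unique, empty) induced subgraph. For $|S|=1$ there is exactly one isomorphism class of induced subgraph, namely a single vertex, so the requirement that $val(S)$ depend only on that class says precisely that $val(x)$ takes the same value for every vertex $x$, which is the definition of regularity. Conversely a regular graph trivially satisfies this. Hence $1$-isoregularity is equivalent to regularity.

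Next I would treat $k=2$. By Proposition~\ref{prop:k-1iso}, a $2$-isoregular graph is $1$-isoregular, hence regular of some valency $k$. A set $S$ with $|S|=2$ induces one of exactly two isomorphism classes: a pair of adjacent vertices (an edge) or a pair of non-adjacent vertices (a non-edge). Since $val(\{x,y\})=|\Gamma(x)\cap\Gamma(y)|$, the condition that this quantity depend only on the isomorphism class of the induced subgraph is exactly the assertion that there are constants $\lambda$ and $\mu$ with $|\Gamma(x)\cap\Gamma(y)|=\lambda$ whenever $x\sim y$ and $|\Gamma(x)\cap\Gamma(y)|=\mu$ whenever $x\not\sim y$. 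Together with the regularity just obtained, this is precisely the definition of a strongly regular graph. Conversely, a strongly regular graph is regular and satisfies these two conditions, and it also satisfies the (vacuous) requirements for $|S|\le 1$, so it is $2$-isoregular.

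There is no genuine obstacle here: the whole argument is bookkeeping of the isomorphism classes of induced subgraphs on zero, one, and two vertices, together with the convention for the empty intersection. The one point that needs a moment's care is to invoke Proposition~\ref{prop:k-1iso}, so that the regularity clause in the definition of strong regularity is not overlooked when passing from $2$-isoregularity back to strong regularity.
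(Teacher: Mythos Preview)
Your proposal is correct and follows essentially the same route as the paper: enumerate the isomorphism classes of induced subgraphs on at most one (respectively two) vertices and observe that the resulting constancy conditions on $val(S)$ are precisely the definitions of regularity (respectively strong regularity). The only differences are cosmetic: you make the empty-set case and the appeal to Proposition~\ref{prop:k-1iso} explicit, whereas the paper leaves the former implicit and states the $1$-isoregularity of a $2$-isoregular graph directly.
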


\begin{proof}
Since there is only one isomorphism class of graphs of order 1, and
since the valency of a singleton $\{x\}$ is the same as the valency of
the vertex $x$, a graph is $1$-isoregular if and only if each vertex has the same
valency, in other words, if and only if the graph is regular.

There are two isomorphism classes of graphs of order $2$, namely, edges
and non-edges. A graph is $2$-isoregular if and only if it is $1$-isoregular,
i.e., regular, and if the number of common neighbours of two distinct
vertices $x$ and $y$ depends only on whether $x$ is adjacent to
$y$. However, this is exactly the definition of strong regularity.
\end{proof}

For a graph $\Gamma$, a vertex $x$, and an integer $i$, we define the
{\em $i$-th subconstituent}\/ $\Gamma_i(x)$ as the subgraph of $\Gamma$
induced by all vertices at distance $i$ from $x$.

\begin{proposition}
For a strongly regular graph $\Gamma$, the following are equivalent:
\begin{itemize}
\item $\Gamma$ is $3$-isoregular.
\item The subconstituents $\Gamma_i(x)$, $i=1,2$, are strongly
  regular, with parameters which do not depend on the choice of $x$.
\end{itemize}
\end{proposition}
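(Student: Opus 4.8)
\emph{The plan} is to build an exact dictionary between the valency $val(\{x,y,z\})=|\Gamma(x)\cap\Gamma(y)\cap\Gamma(z)|$ of a three-element vertex set and a common-neighbour count inside one of the two subconstituents of $\Gamma$ at $x$, and then to read both implications off this dictionary. One preliminary reduction: I may assume $\Gamma$ is connected and not complete, so that the non-neighbours of a vertex $x$ are precisely the vertices at distance $2$ from $x$, and hence form the vertex set $V(\Gamma_2(x))$; in the remaining cases ($\Gamma$ complete, $\Gamma$ edgeless, or $\Gamma$ a disjoint union of at least two cliques) both statements of the proposition are easily checked directly. I would also record at the outset the two facts, true in every strongly regular graph with parameters $(v,k,\lambda,\mu)$ and immediate from the definitions, that $\Gamma_1(x)$ is regular of valency $\lambda$ and $\Gamma_2(x)$ is regular of valency $k-\mu$; thus the statement ``$\Gamma_i(x)$ is strongly regular with parameters independent of $x$'' has content only in the two numbers $\lambda_i(x)$ (common neighbours in $\Gamma_i(x)$ of an edge of $\Gamma_i(x)$) and $\mu_i(x)$ (the same for a non-edge of $\Gamma_i(x)$): the claim is that these four quantities are well defined and do not depend on $x$.

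The heart of the argument is to check, by a short count in each case, the following correspondence, in which $\{x,y,z\}$ is a set of three distinct vertices:
\begin{itemize}
\item if $\{x,y,z\}$ induces a triangle, then $val(\{x,y,z\})$ equals the number of common neighbours in $\Gamma_1(x)$ of the edge $\{y,z\}$;
\item if $\{x,y,z\}$ induces a path with middle vertex $x$, then $val(\{x,y,z\})$ equals the number of common neighbours in $\Gamma_1(x)$ of the non-edge $\{y,z\}$;
\item if $\{x,y,z\}$ induces a single edge $\{y,z\}$ together with the isolated vertex $x$, then $val(\{x,y,z\})$ equals $\lambda$ minus the number of common neighbours in $\Gamma_2(x)$ of the edge $\{y,z\}$;
\item if $\{x,y,z\}$ induces no edge, then $val(\{x,y,z\})$ equals $\mu$ minus the number of common neighbours in $\Gamma_2(x)$ of the non-edge $\{y,z\}$.
\end{itemize}
The first two lines hold because every common neighbour of $x$ lies in $\Gamma_1(x)$. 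For the last two, $y$ and $z$ have exactly $\lambda$, resp.\ $\mu$, common neighbours in $\Gamma$, none of them equal to $x$ (as $x\not\sim y$), and each such common neighbour is either adjacent to $x$ --- in which case it is counted by $val(\{x,y,z\})$ --- or a non-neighbour of $x$, in which case, by the reduction above, it lies in $\Gamma_2(x)$ and is a common neighbour there of $\{y,z\}$. It is essential that every isomorphism type of three-vertex graph is realised in exactly this form for a suitable choice of the distinguished vertex $x$: for a path, $x$ must be the vertex of degree $2$; for an edge-plus-isolated-vertex, $x$ must be the isolated vertex; for a triangle or an independent triple, any of the three vertices may serve.

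With the dictionary in hand the two implications are immediate. If $\Gamma$ is $3$-isoregular, then $val$ takes a fixed value on each of the four isomorphism types of three-vertex graph; reading the four lines above as assertions about $\Gamma_1(x)$ and $\Gamma_2(x)$ shows that $\lambda_1(x)$, $\mu_1(x)$, $\lambda_2(x)$, $\mu_2(x)$ are constants independent of $x$ (and of the chosen edge or non-edge), so each $\Gamma_i(x)$ is strongly regular with parameters not depending on $x$. Conversely, if both subconstituents are strongly regular with $x$-independent parameters, then the right-hand sides of the four lines are constants, so $val$ is constant on three-vertex sets of each isomorphism type; since $val$ is trivially constant on sets of size at most $1$ and, $\Gamma$ being strongly regular and hence $2$-isoregular, on sets of size $2$, the graph $\Gamma$ is $3$-isoregular.

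\emph{The main obstacle} is not conceptual but bookkeeping: getting the dictionary exactly right, in particular identifying which vertex must play the role of $x$ in the path and the edge-plus-isolated-vertex cases (so that the correspondence covers \emph{all} such triples), and keeping straight the complementary counts ``$\lambda-{}$'' and ``$\mu-{}$''. The only other point requiring attention is the opening reduction to the connected, non-complete case, which is what licenses replacing ``non-neighbour of $x$'' by ``vertex of $\Gamma_2(x)$''.
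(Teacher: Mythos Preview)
Your proof is correct and takes essentially the same approach as the paper: both arguments rest on the identical four-line dictionary linking $val(\{x,y,z\})$ for each isomorphism type of triple to a common-neighbour count (or its complement in $\lambda$ or $\mu$) inside $\Gamma_1(x)$ or $\Gamma_2(x)$. The only notable organisational difference is that for the forward implication the paper derives strong regularity of $\Gamma_2(x)$ by passing to the complement $\overline{\Gamma}$ and invoking the already-proved $\Gamma_1$ case, whereas you handle $\Gamma_2(x)$ directly via the ``$\lambda-{}$'' and ``$\mu-{}$'' lines; your treatment is also more scrupulous about the degenerate cases (complete, edgeless, union of cliques) and about which vertex must serve as the distinguished $x$ in the path and edge-plus-isolate configurations, points the paper passes over silently.
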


\begin{proof}
There are four isomorphism classes of graphs of order $3$, determined
uniquely by the number of edges they contain. We will denote these
classes by $\Delta_i$, where $0\le i\le 3$ is the number of edges.

Let $\Gamma$ be a $3$-isoregular graph. By Proposition
\ref{prop:k-1iso}, it is $2$-isoregular, and hence strongly regular,
with parameters $(v,k,\lambda,\mu)$.

Let $x$ be a vertex of $\Gamma$, and let $\Gamma_1=\Gamma(x)$, the
subgraph induced by the neighbours of $x$. $\Gamma_1$ is a regular
graph of order $k$ and valency $\lambda$. Let $y$ and $z$ be vertices
of $\Gamma_1$. Then the common neighbours of $y$ and $z$ in $\Gamma_1$
are exactly the common neighbours of $x,y,z$ in $\Gamma$.

If $y\sim z$, then $x,y,z$ induce a complete graph $\Delta_3$ in
$\Gamma$. Hence, $y$ and $z$ have $val(\Delta_3)$ neighbours in
$\Gamma_1$. Similarly, if $y\not\sim z$, $x,y,z$ induce the graph
$\Delta_2$ in $\Gamma$. Thus $y$ and $z$ have $val(\Delta_2)$
neighbours in $\Gamma_1$.  So we get that $\Gamma_1$ is strongly
regular, with parameters 
\begin{eqnarray*}
  v_1&=&k\\ 
  k_1&=&\lambda\\
  \lambda_1&=&val(\Delta_3)\\ 
  \mu_1&=&val(\Delta_2)).
\end{eqnarray*}
We get strong regularity for the second subconstituents by working
with the complement of $\Gamma$. More precisely, if $\Gamma$ is
3-isoregular, then so is the complement $\bar \Gamma$, by
Proposition~\ref{prop:comp-iso}. Then, by the argument above, the
first subconstituent of $\bar\Gamma$ is strongly regular; however,
this is precisely the complement of the second subconstituent of
$\Gamma$. 

Conversely, assume that $\Gamma$ is a strongly regular graph with
parameters $(v,k,\lambda,\mu)$, such that the subconstituents
$\Gamma_i(x)$, $i=1,2$,  are 
strongly regular 
with parameters $(v_i,k_i,
\lambda_i, \mu_i)$. By assumption, $\Gamma$ is $2$-isoregular, hence
we only need to check the graphs $\Delta_i$ of order $3$. Clearly, in $\Gamma$ we have
$val(\Delta_3)=\lambda_1$, and $val(\Delta_2) = \mu_1$.

Let $x,y,z$ be pairwise non-adjacent vertices in $\Gamma$. $y$ and $z$
have $\mu$ common neighbours in $\Gamma$; of these, $\mu_2$ are not
neighbours of $x$. Hence we get that $val(\Delta_0) = \mu-\mu_2$.

Similarly, if $y\sim z$, and both are non-adjacent to $x$, they have
$\lambda$ neighbours in $\Gamma$, and $\lambda_2$ neighbours in
$\Gamma_2$. Hence, $val(\Delta_1)=\lambda-\lambda_2$.

Altogether, we get that $\Gamma$ is $3$-isoregular.
\end{proof}

\section{The $t$-vertex condition}\label{sec:t-vertex}

Here, we discuss another way of generalising strong regularity. Let
$\Gamma$ be a graph of order $v$. Let $T$ be a graph of order $t$,
with two distinguished vertices $x_0$ and $y_0$. We will denote such a
triple $(T, x_0, y_0)$ as  a {\em graph type}. $x_0$ and $y_0$
will be called {\em fixed vertices}, the other vertices of $T$ will be
called {\em additional vertices}.

Let $x,y$ be vertices of $\Gamma$, and let $\Delta$ be an induced
subgraph of $\Gamma$ containing both $x$ and $y$. $\Delta$ is said to
be of {\em type $T$}\/ (with respect to $x$ and $y$) if there is an
isomorphism $\phi:T\to \Delta$ which maps $x_0$ to $x$ and $y_0$ to
$y$.

\begin{define}
Let $\Gamma$ be a graph, and let $t\ge 2$ be an integer. Suppose that
for any graph type $(T, x_0, y_0)$ of order at most $t$, and
for any pair of vertices $(x,y)$ of $\Gamma$, the number of subgraphs
of $\Gamma$ which are of type $T$ w.r.t.\ $x$ and $y$ depends only on
whether $x$ and $y$ are equal, adjacent, or non-adjacent. Then
$\Gamma$ is said to satisfy the {\em $t$-vertex condition.}
\end{define}

In other words, $\Gamma$ satisfies the $t$-vertex condition if we
cannot distinguish its edges (vertices, non-edges) by considering
subgraphs of order up to $t$.

\begin{proposition}
For $t>2$, the $t$-vertex condition implies the $(t-1)$-vertex
condition. 
\end{proposition}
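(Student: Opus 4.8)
The plan is to observe that this is an immediate consequence of the definition, exactly parallel to Proposition~\ref{prop:k-1iso}. The $t$-vertex condition is a statement quantified over all graph types $(T,x_0,y_0)$ of order at most $t$; the $(t-1)$-vertex condition is the same statement quantified over all graph types of order at most $t-1$. Since every graph type of order at most $t-1$ is in particular one of order at most $t$, the hypothesis directly yields the conclusion: for each such type and each pair $(x,y)$ of vertices of $\Gamma$, the number of subgraphs of $\Gamma$ of that type with respect to $x$ and $y$ depends only on whether $x$ and $y$ are equal, adjacent, or non-adjacent. So the proof is a single line appealing to the definition.

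For completeness I would also sketch what the argument looks like under the (more traditional) variant in which $T$ is required to have exactly $t$ vertices, since that is the setting in which the implication carries actual content. There one fixes a graph type $(S,x_0,y_0)$ on $t-1$ vertices and a pair $(x,y)$, writes $N_S(x,y)$ for the number of induced subgraphs of $\Gamma$ of type $S$ w.r.t.\ $x,y$, and counts in two ways the pairs $(\Delta,w)$ where $\Delta$ has type $S$ and $w\in V(\Gamma)\setminus V(\Delta)$. Each such pair produces an induced subgraph $\Delta\cup\{w\}$ on $t$ vertices of some graph type $T$ (with the same fixed vertices), so
\[
(v-(t-1))\,N_S(x,y)=\sum_{(T,x_0,y_0)} c_{T,S}\,N_T(x,y),
\]
where $c_{T,S}$ counts the additional vertices $w$ of $T$ for which deleting $w$ yields a copy of $S$ as a graph type. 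Since the right-hand side depends only on the relation between $x$ and $y$, so does $N_S(x,y)$, provided $v\ge t$ so that one may divide by $v-(t-1)$.

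The only obstacle in that variant is the degenerate regime $v\le t-1$, where every order-$t$ count is zero, so the order-$t$ condition is vacuous while the order-$(t-1)$ condition need not be; this is precisely the reason the ``order at most $t$'' formulation adopted here is preferable, and under that formulation no such obstacle arises and the one-line argument suffices.
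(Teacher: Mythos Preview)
Your proposal is correct and matches the paper's own proof, which is literally the single line ``Follows directly from the definition''; since the paper's definition quantifies over graph types of order \emph{at most} $t$, the implication is immediate, exactly as you say in your first paragraph. The additional sketch for the ``exactly $t$'' variant is correct but unnecessary here, as the paper does not use that formulation.
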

\begin{proof}
Follows directly from the definition.
\end{proof}
\begin{proposition}
A graph $\Gamma$ satisfies the $2$-vertex condition if and only if it is
regular. It satisfies the $3$-vertex condition if and only if it is strongly
regular. 
\end{proposition}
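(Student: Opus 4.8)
The plan is to prove each equivalence directly from the definition, in each case by enumerating the relevant graph types. For the $2$-vertex condition, the graph types of order at most $2$ are: the one-vertex type with $x_0=y_0$; the edge and the non-edge on $\{x_0,y_0\}$ (with $x_0\neq y_0$); and the two types consisting of $x_0=y_0$ together with one additional vertex $w$, according to whether $x_0\sim w$. For the first three, the number of type-$T$ subgraphs of $\Gamma$ through a pair $(x,y)$ is visibly a function of the class of $(x,y)$ alone ($1$ if $x=y$, $x\sim y$, $x\not\sim y$ respectively, and $0$ otherwise), so these impose no restriction. For each of the last two, $x=y$ is forced and the count is $val(x)$ respectively $v-1-val(x)$; requiring this to be the same for every vertex is precisely regularity, so both directions follow when $t=2$.

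For the $3$-vertex condition, one direction is quick: by the previous proposition it implies the $2$-vertex condition, hence regularity; and applying it to the triangle type (fixed vertices adjacent, one additional vertex adjacent to both) and to the $2$-path with the additional vertex as midpoint (fixed vertices non-adjacent, additional vertex adjacent to both) shows that $|\Gamma(x)\cap\Gamma(y)|$ is constant over edges and constant over non-edges, so the parameters $\lambda$ and $\mu$ exist and $\Gamma$ is strongly regular. For the converse, assume $\Gamma$ is strongly regular with parameters $(v,k,\lambda,\mu)$ and enumerate the graph types of order at most $3$; those of order $\le 2$ are covered above. A type of order $3$ with $x_0\neq y_0$ has exactly one additional vertex $z$ whose adjacency to $x$ and to $y$ is prescribed, so counting the corresponding subgraphs through $(x,y)$ reduces to counting vertices $z\neq x,y$ with prescribed adjacencies to $x$ and $y$, and in each of the four cases the count is expressible through $v$, $k$, the quantity $|\Gamma(x)\cap\Gamma(y)|\in\{\lambda,\mu\}$, and the adjacency of $x$ and $y$, hence depends only on the class of $(x,y)$. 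A type of order $3$ with $x_0=y_0$ has two additional vertices $z_1,z_2$, each prescribed to lie in $\Gamma(x)$ or among the non-neighbours of $x$, with a prescribed adjacency between $z_1$ and $z_2$; here one uses that strong regularity already forces $\Gamma(x)$ to be regular of valency $\lambda$ and the non-neighbour subgraph to be regular of valency $k-\mu$, whence the numbers of edges and of non-edges inside $\Gamma(x)$, inside the non-neighbour subgraph, and between the two are determined by $v,k,\lambda,\mu$; again the count depends only on whether $x=y$.

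Most of this is routine bookkeeping. The point that genuinely needs care, and that carries the real content of the converse, is the observation that the subconstituent valencies $\lambda$ and $k-\mu$ are forced by the two-vertex data alone and do not require the full strength of $3$-isoregularity --- together with handling the degenerate overlaps correctly (for example excluding $z=y$ when $y\in\Gamma(x)$) so that nothing is double-counted.
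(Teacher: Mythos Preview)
The paper states this proposition without proof, treating it as elementary. Your argument is correct and complete: the enumeration of graph types of order at most $2$ and at most $3$ is exhaustive, and in each case the count is correctly expressed in terms of $v,k,\lambda,\mu$ (or just $v,k$ for $t=2$). Your handling of the order-$3$ types with $x_0=y_0$ via the subconstituent valencies $\lambda$ and $k-\mu$ is the right observation; the paper later remarks, in its discussion section, that for regular graphs this ``loop'' case is in fact redundant, but your direct verification is self-contained and suffices.
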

 
\begin{proposition}
A rank 3 graph of order $v$ satisfies the $v$-vertex condition.
\end{proposition}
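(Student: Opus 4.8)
The statement to prove is: \emph{A rank 3 graph of order $v$ satisfies the $v$-vertex condition.}

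\bigskip

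The plan is to unwind the two definitions and observe that the $v$-vertex condition asks for a counting invariant that is visibly preserved by the automorphism group, so that vertex-, edge-, and non-edge-transitivity does all the work. Recall that a rank 3 graph $\Gamma$ is one admitting a group $G \le \mathrm{Aut}(\Gamma)$ acting transitively on vertices, on ordered pairs of adjacent vertices, and on ordered pairs of non-adjacent vertices (equivalently, $G$ acts on $V(\Gamma)$ with exactly three orbits on $V \times V$: the diagonal, the edges, and the non-edges). I would open the proof by recalling this characterisation, since the paper has referred to ``rank 3 graphs'' without restating it in full here.

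\bigskip

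First I would fix a graph type $(T, x_0, y_0)$ with $|V(T)| = t \le v$, and a pair of vertices $(x,y)$ of $\Gamma$. Write $N_T(x,y)$ for the number of induced subgraphs $\Delta \subseteq \Gamma$ of type $T$ with respect to $x$ and $y$. The key step is to show that $N_T$ is constant on each of the three $G$-orbits on $V \times V$. For this, take $g \in G$; since $g$ is an automorphism of $\Gamma$, it maps induced subgraphs to induced subgraphs and preserves isomorphism type, and it maps a subgraph of type $T$ w.r.t.\ $(x,y)$ to one of type $T$ w.r.t.\ $(x^g, y^g)$ (compose the witnessing isomorphism $\phi : T \to \Delta$ with $g|_\Delta$). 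Hence $g$ induces a bijection between the set of type-$T$ subgraphs for $(x,y)$ and the set for $(x^g, y^g)$, so $N_T(x,y) = N_T(x^g, y^g)$. Therefore $N_T$ is constant on $G$-orbits.

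\bigskip

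Now I would conclude: the three $G$-orbits on $V \times V$ are precisely (i) pairs $(x,x)$ with $x = y$, (ii) pairs with $x \sim y$, and (iii) pairs with $x \not\sim y$ --- this is exactly the rank 3 hypothesis. So $N_T(x,y)$ depends only on which of these three cases $(x,y)$ falls into, which is the $v$-vertex condition (note $t \le v$ ranges over all the relevant orders, so this covers every graph type of order at most $v$). I expect no serious obstacle here: the only thing needing a little care is the bookkeeping that an automorphism genuinely carries a ``subgraph of type $T$ with respect to $(x,y)$'' to one ``with respect to $(x^g,y^g)$'', i.e.\ that the two distinguished vertices travel correctly along the isomorphism $\phi$ --- and that is immediate once one writes $\phi' = g \circ \phi$ and checks $\phi'(x_0) = x^g$, $\phi'(y_0) = y^g$. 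A one-line remark that the argument in fact shows a rank 3 graph satisfies the $t$-vertex condition for \emph{every} $t$ (vacuously for $t > v$, since no induced subgraph has more than $v$ vertices) would round off the proof, matching the claim made in the introduction.
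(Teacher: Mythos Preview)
Your proof is correct and follows the same approach as the paper: transitivity of the automorphism group on vertices, ordered edges, and ordered non-edges forces any combinatorial count attached to a pair $(x,y)$ to depend only on which orbit the pair lies in. The paper's own proof is a two-sentence version of exactly this argument, so your write-up simply makes explicit the bijection (via $\phi' = g\circ\phi$) that the paper leaves implicit.
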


\begin{proof}
In a rank 3 graph, the automorphism group acts transitively on
vertices, (directed) edges and non-edges. Hence we cannot 
distinguish edges (resp. non-edges) on a combinatorial level.
\end{proof}

For growing $t$, the number of graph types to check increases very
quickly. However, it turns out that many of these checks are
redundant.

\begin{theorem}[\cite{Rei00}]\label{thm:t-vertex}
Let $\Gamma$ be a $k$-isoregular graph which satisfies the
$(t-1)$-vertex condition. In order to check that $\Gamma$ satisfies
the $t$-vertex condition  
it suffices to consider graph types in which each additional
vertex has valency at least $k+1$.
\end{theorem}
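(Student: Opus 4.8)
The plan is to show that any graph type $(T,x_0,y_0)$ of order $t$ which contains an additional vertex of valency at most $k$ is \emph{redundant}: its type-count on a pair $(x,y)$ is already determined by the $(t-1)$-vertex condition together with $k$-isoregularity. The strategy is a standard counting-by-deletion argument. Pick an additional vertex $z$ of $T$ with $\deg_T(z)=d\le k$, and let $T' = T - z$, a graph type of order $t-1$. Any induced subgraph $\Delta\subseteq\Gamma$ of type $T$ with respect to $(x,y)$ restricts to an induced subgraph $\Delta' = \Delta - \phi(z)$ of type $T'$; conversely, a subgraph of type $T'$ extends to one of type $T$ exactly by choosing a vertex $w\in V(\Gamma)\setminus V(\Delta')$ whose adjacencies to the vertices of $\Delta'$ match those prescribed for $z$ in $T$. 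So the number of type-$T$ subgraphs on $(x,y)$ equals the sum, over all type-$T'$ subgraphs $\Delta'$ on $(x,y)$, of the number of such extension vertices $w$.

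The key step is therefore to show that, for a fixed copy $\Delta'$ of $T'$, the number of valid extension vertices $w$ depends only on the isomorphism type $T'$ (not on the particular embedding), so that it can be pulled out of the sum and the whole expression becomes (number of type-$T'$ subgraphs)$\times$(a constant), which by the $(t-1)$-vertex condition depends only on the relation between $x$ and $y$. Let $N\subseteq V(\Delta')$ be the set of $d$ vertices that $z$ must be adjacent to, and let $M = V(\Delta')\setminus N$ be the $t-1-d$ vertices $z$ must be non-adjacent to. The number of vertices of $\Gamma$ adjacent to every vertex of $N$ is $val(N)$; since $|N| = d \le k$ and $\Gamma$ is $k$-isoregular, this depends only on the induced subgraph on $N$, which is determined by $T$. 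We then need the count of those $val(N)$ vertices that are additionally non-adjacent to all of $M$ and lie outside $\Delta'$. This is where one must be slightly careful: pass to inclusion–exclusion over subsets of $M$, writing the desired count as $\sum_{L\subseteq M}(-1)^{|L|}\,val(N\cup L)$, adjusted by subtracting the (constant, since $T$ is fixed) number of vertices of $\Delta'$ itself that satisfy the adjacency pattern. Each term $val(N\cup L)$ has $|N\cup L|\le |N|+|M| = t-1 < t$, but we need $|N\cup L|\le k$ to invoke $k$-isoregularity directly.

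The main obstacle is exactly this last point: when $|N\cup L|$ exceeds $k$, plain $k$-isoregularity does not immediately give that $val(N\cup L)$ depends only on the induced subgraph. The resolution is that $N\cup L$ is a set of at most $t-1$ vertices \emph{all contained in a fixed type-$T'$ subgraph} of $\Gamma$, so the quantity "number of common neighbours of $N\cup L$ lying outside that subgraph" is itself one of the invariants controlled by the $(t-1)$-vertex condition — one simply counts subgraphs of a suitable type of order at most $t-1$ (namely $T'$ with an extra vertex attached to $N\cup L$, which has $\le t$ vertices, but with the redundancy of a low-valency vertex removed by induction on $t$, or handled by a simultaneous induction). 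Thus the argument is really an induction in which redundancy for order $t$ is reduced to invariants of order $t-1$, using $k$-isoregularity only to seed the base counts $val(N)$ with $|N|\le k$. Once the extension count is shown to be a constant depending only on $T$, summing over the type-$T'$ subgraphs and applying the $(t-1)$-vertex hypothesis finishes the proof, since every graph type with \emph{all} additional vertices of valency $\ge k+1$ has been excluded from consideration precisely by the statement.
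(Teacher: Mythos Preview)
Your overall strategy---delete a low-degree additional vertex $z$, count extensions of the resulting $(t-1)$-type $T'$---is exactly the paper's. The gap is in the sentence ``the number of valid extension vertices $w$ depends only on the isomorphism type $T'$ (not on the particular embedding), so that it can be pulled out of the sum.'' This is false in general. In your own inclusion--exclusion $\sum_{L\subseteq M}(-1)^{|L|}val(N\cup L)$, the terms with $|N\cup L|>k$ genuinely vary from one copy $\Delta'$ of $T'$ to another: for instance, in a merely strongly regular graph ($k=2$), the number of common neighbours of a $3$-set of prescribed isomorphism type need not be constant. Your attempted rescue---that $val(N\cup L)$ is ``controlled by the $(t-1)$-vertex condition'' because one can attach a new vertex to $T'$---lands you back at a type of order $t$, and invoking ``induction'' at that point is circular unless you say precisely what the induction is on.

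The missing idea, which the paper uses explicitly, is to take $T$ \emph{maximal} among order-$t$ types whose count varies (equivalently: do a descending induction on the number of edges of $T$). Then, instead of trying to make the per-$\Delta'$ extension count constant, sum over all $\Delta'$ the quantity ``number of $w\notin\Delta'$ adjacent to every vertex of $N$, with no constraint on $M$.'' Since $|N|\le k$, $k$-isoregularity makes this a constant for each $\Delta'$, so the sum is (number of $T'$-subgraphs)$\times$(constant), which is invariant by the $(t-1)$-vertex condition. On the other hand, that same sum equals $\sum_{L\subseteq M}(\text{number of subgraphs of type }T_L)$, where $T_L$ is $T$ with the extra edges from $z$ to $L$ added. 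For every nonempty $L$ the type $T_L$ has strictly more edges than $T$, so by maximality its count is invariant; subtracting, the $L=\emptyset$ term---the count of $T$ itself---is invariant too, a contradiction. This is what replaces your attempt to pull a constant out of the sum.
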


The idea behind the proof is the following: Suppose that $\Gamma$ does
not satisfy the $t$-vertex condition. Then there is a 
graph type $(T, x, y)$ such that the number of induced subgraphs of this
type does depend on the choice of $x$ and $y$. We may assume that $T$
is maximal with this property, in other words, for any graph type $T'$
where $T'$ properly containing $T$, the numbers are invariant. If $T$
contains an 
vertex $z$ (other than $x$ and $y$) of valency at most $k$, we can
delete $z$ and use the $(t-1)$ vertex condition to count the resulting
graph type; after that we use the $k$-isoregularity to recover the
number of graphs of type $(T,x,y)$. This contradicts the choice of $T$.

In fact, we will be using the following reformulation of Theorem
\ref{thm:t-vertex}: 
\begin{corollary}\label{cor:t-vertex}
Let $\Gamma$ be a graph which is $k$-isoregular, and which satisfies
the $(t-1)$-vertex condition, 
but not the $t$-vertex condition. Then there is a graph type
$(T,x,y)$ such that all vertices other than $x$ and $y$ have valency
at least $k+1$, and such that the number of graphs of type $(T,x,y)$
depends on the choice of $x$ and $y$ in $\Gamma$.
\end{corollary}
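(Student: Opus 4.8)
The plan is to obtain this corollary simply as the contrapositive of Theorem~\ref{thm:t-vertex}. Recall that, under the standing assumptions that $\Gamma$ is $k$-isoregular and satisfies the $(t-1)$-vertex condition, Theorem~\ref{thm:t-vertex} tells us that in order to certify the $t$-vertex condition it is enough to inspect only the graph types $(T,x_0,y_0)$ of order at most $t$ in which every additional vertex has valency at least $k+1$. Spelled out, the phrase ``it suffices to consider'' means: if, for every such restricted type, the number of subgraphs of $\Gamma$ of that type with respect to a pair of vertices is determined by whether the two vertices are equal, adjacent, or non-adjacent, then $\Gamma$ satisfies the full $t$-vertex condition.

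First I would contrapose this implication under the hypotheses of the corollary. So assume $\Gamma$ is $k$-isoregular, satisfies the $(t-1)$-vertex condition, but fails the $t$-vertex condition. If it were true that, for \emph{every} graph type $(T,x_0,y_0)$ of order at most $t$ all of whose additional vertices have valency at least $k+1$, the corresponding count depended only on the relation (equal, adjacent, or non-adjacent) between the fixed pair, then Theorem~\ref{thm:t-vertex} would force $\Gamma$ to satisfy the $t$-vertex condition, contradicting the assumption. Hence there must exist at least one such type $(T,x_0,y_0)$, all of whose additional vertices have valency at least $k+1$, for which the count is \emph{not} determined by that relation alone, i.e., for which it depends on the actual choice of the pair $(x,y)$ in $\Gamma$. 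Identifying the fixed vertices $x_0,y_0$ with $x,y$ gives exactly the statement to be proved.

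I do not expect any serious obstacle at this stage: the real work has already gone into Theorem~\ref{thm:t-vertex} (delete an additional vertex of valency at most $k$, recover the smaller type's count from the $(t-1)$-vertex condition, and reconstruct the original count from the valency data provided by $k$-isoregularity), and here we merely repackage it. The only points demanding care are bookkeeping ones: confirming that an ``additional vertex'' of a graph type is precisely a vertex other than the two fixed ones, and remembering that the admissible types are allowed to have any order up to $t$, not only order exactly $t$ (a type whose additional vertices all have valency at least $k+1$ may perfectly well have fewer than $t$ vertices). If for some reason one preferred a proof that did not cite Theorem~\ref{thm:t-vertex}, the plan would instead be to rerun its maximality argument in place, and there the one genuinely delicate step is arranging the reconstruction so that it invokes only instances of $k$-isoregularity already available.
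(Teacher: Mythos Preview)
Your proposal is correct and matches the paper's approach exactly: the paper introduces this corollary explicitly as ``a reformulation of Theorem~\ref{thm:t-vertex}'' and gives no separate proof, so the intended argument is precisely the contrapositive you describe.
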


Finally, we present a complexity result related to the $t$-vertex
condition: 

\begin{theorem}
\label{thm:polynomial}
For a fixed integer $t$ and a graph $\Gamma$ of order $n$, the $t$-vertex
condition can be checked in time polynomial in $n$.
\end{theorem}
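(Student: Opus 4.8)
The plan is to observe that, once $t$ is treated as a constant, the obvious brute-force verification already runs in polynomial time; the whole argument is a careful accounting showing that every quantity that grows is bounded by a fixed power of $n$, while everything depending on $t$ alone contributes only a constant factor.

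First I would bound the number of graph types. A graph type of order $s\le t$ consists of a graph on $s$ labelled vertices together with a choice of its two fixed vertices, so there are at most $\sum_{s=2}^{t} 2^{\binom{s}{2}}\,s(s-1)$ of them; call this number $C(t)$. Since $t$ is fixed, $C(t)=O(1)$.

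Next, fix a single graph type $(T,x_0,y_0)$ with $|V(T)|=s$ and an ordered pair $(x,y)$ of vertices of $\Gamma$. To compute the number of induced subgraphs of $\Gamma$ of type $T$ with respect to $x$ and $y$, I would enumerate all ordered tuples of $s-2$ distinct vertices from $V(\Gamma)\setminus\{x,y\}$ --- there are at most $n^{s-2}\le n^{t-2}$ of them --- and for each tuple test whether assigning it to the additional vertices of $T$, together with $x_0\mapsto x$ and $y_0\mapsto y$, defines an isomorphism onto the subgraph of $\Gamma$ induced by the chosen vertices and $\{x,y\}$; this test requires at most $\binom{s}{2}$ adjacency comparisons, hence constant time. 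Dividing the number of successful assignments by $|\mathrm{Aut}(T,x_0,y_0)|$ gives the desired count. Thus one count is produced in $O(n^{t-2})$ time; ranging over the $O(n^2)$ ordered pairs $(x,y)$ and over the $C(t)$ graph types, all relevant counts are obtained in $O(n^t)$ time.

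Finally, to decide the condition itself I would, for each graph type, pass once more over the $O(n^2)$ pairs and check that the associated count is constant on the pairs with $x=y$, constant on the edges, and constant on the non-edges, at an extra cost of $O(n^2)$ per type. Altogether the procedure runs in $O(n^t)$ time, which is polynomial in $n$ because $t$ is fixed. (If one first establishes that $\Gamma$ is $k$-isoregular for some $k$, Theorem~\ref{thm:t-vertex} lets one restrict to graph types whose additional vertices have valency at least $k+1$, which reduces the constant but not the asymptotics.) There is no real obstacle; the only points requiring care are making explicit that the isomorphism test on a fixed-size labelled configuration takes time bounded in terms of $t$ only, and keeping the bookkeeping straight --- counting induced subgraphs versus type-preserving isomorphisms, and treating the degenerate case $x=y$ --- neither of which affects the polynomial bound.
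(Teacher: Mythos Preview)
Your proposal is correct and follows essentially the same brute-force counting argument as the paper: enumerate the $O(1)$ graph types, for each type and each of the $O(n^2)$ ordered pairs run over the at most $n^{t-2}$ ordered tuples of additional vertices, and test in $O(t^2)$ time whether the resulting labelled map is an isomorphism, giving $O(n^t)$ overall. The only cosmetic differences are that the paper reduces by induction to types of order exactly $t$ and does not bother dividing by $|\mathrm{Aut}(T,x_0,y_0)|$ (constancy of the embedding count is equivalent to constancy of the subgraph count), whereas you handle all orders $s\le t$ directly and make the automorphism correction explicit.
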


\begin{proof}
We may assume that $t\ge 3$, and that the $(t-1)$-vertex condition has
already been checked.

Fix a graph type $T=(\Delta, z_1, z_2)$ of order $t$ and two vertices $z_1', z_2'$ of
$\Gamma$. Suppose that either both $(z_1,z_2)$ is an edge in $\Delta$ and
$(z_1',z_2')$ is an edge in $\Gamma$, or both pairs are non-edges in their
respective graphs. Arbitrarily label the remaining vertices in
$\Delta$ by $z_3, \ldots, z_t$.

Now we consider all sequences $(z_3', \ldots, z_t')$ of distinct
vertices in $\Gamma$; there are fewer than $n^{t-2}$ of them. For each
sequence we may consider the labelled subgraph induced by the $z_i'$,
$1\le i\le t$. We can check in time proportional to $t^2$ whether
$\phi:z_i\mapsto z_i'$ is a graph isomorphism. Hence, in time
$O(n^{t-2}t^2)$ we can count the graphs of type $T$ with respect to
the given vertices $z_1'$ and $z_2'$.

There are fewer than $n^2$ possible pairs $(z_1', z_2')$ of
vertices. Repeating the count for all of them gives us the
numbers of graphs of type $T$ for all these pairs in time
$O(n^{t-2}\cdot n^2)= O(n^t)$, since $t$ is constant. Since there are
only finitely many graph types of order $t$, this proves the theorem.
\end{proof}

\section{Generalised quadrangles}
\label{sec:gq}

\begin{define}
Let $P$ be a finite set of {\em points}. Let $L$ be a set of
distinguished subsets of $P$, called {\em lines.}\/
 Suppose there are
integers $s$ and $t$ such that
\begin{itemize}
\item any two lines intersect in at most one point;
\item each line contains exactly $s+1$ points;
\item each point is contained in exactly $t+1$ lines.
\end{itemize}
Then $(P,L)$ is a {\em partial linear space} of order $(s,t)$.
\end{define}

We use the traditional geometric language: Two points are collinear if they
lie on a common line; two lines are concurrent if they intersect.

Given a partial linear space, we can define a graph as follows:
\begin{define}
Let $(P,L)$ be a partial linear space. Let $\Gamma$ be the graph with
vertex set $P$, two points being adjacent if they are collinear. Then
$\Gamma$ is called the {\em point graph} of $(P,L)$.
\end{define}

Generalised quadrangles are partial linear spaces which satisfy one
additional regularity property.
\begin{define}
Let $(P,L)$ be a partial linear space. Suppose that for each line $l$
and each point $P\notin l$, there is exactly one point on $l$
collinear to $P$. Then $(P,L)$ is a {\em generalised quadrangle} of
order $(s,t)$, or a $GQ(s,t)$ for short.
\end{define}

Below we survey a few classical results about generalized quadrangles.

\begin{theorem}
The point graph of a $GQ(s,t)$ is strongly regular, with parameters
\begin{eqnarray*}
v &=& (s+1)(st+1)\\
k &=& (s-1)t\\
\lambda &=& s-1\\
\mu &=& t+1
\end{eqnarray*}
\end{theorem}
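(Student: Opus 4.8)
The plan is to verify the four parameters by double counting within the incidence geometry, using only the two structural facts at hand: two points lie on at most one common line, and for a point $P$ not on a line $\ell$ exactly one point of $\ell$ is collinear with $P$. As a preliminary I would record that a $GQ(s,t)$ contains no point triangles: if $x,y,z$ were pairwise collinear but not all on one line, the lines $xy,yz,zx$ would be pairwise distinct, and since $x\notin yz$ (otherwise $xy=xz=yz$) the quadrangle axiom applied to $x$ and $yz$ would give a unique point of $yz$ collinear with $x$, contradicting that $y$ and $z$ are two such points.

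The three ``local'' parameters then come out quickly. Fixing a point $x_0$, it lies on $t+1$ lines, each contributing its $s$ further points, and distinct lines through $x_0$ meet only in $x_0$; hence the valency is $s(t+1)$, which already establishes regularity. If $x\sim y$, their unique common line contributes $s-1$ common neighbours and, by the no-triangle observation, there are no others, so $\lambda=s-1$. If $x\not\sim y$, then for each of the $t+1$ lines $m$ through $x$ we have $y\notin m$, so the quadrangle axiom produces a unique point $z_m\in m$ collinear with $y$, and $z_m\neq x$ since $x\not\sim y$; distinct lines give distinct $z_m$, and every common neighbour $z$ of $x,y$ occurs as $z_m$ for $m=xz$, so $\mu=t+1$.

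It remains to count $v$. Having fixed $x_0$, exactly $1+s(t+1)$ points equal or are collinear with $x_0$, so I would count the rest via pairs $(z,y)$ with $z$ a neighbour of $x_0$ and $y$ a neighbour of $z$ that is neither $x_0$ nor a neighbour of $x_0$. For fixed $z$, the line $x_0z$ contributes no such $y$, while each of the other $t$ lines through $z$ does not pass through $x_0$ and hence, by the quadrangle axiom, meets the closed neighbourhood of $x_0$ only in $z$, contributing its remaining $s$ points; this gives $st$ choices of $y$ per $z$, hence $s(t+1)\cdot st$ pairs. For a fixed admissible $y$, the number of corresponding $z$ is exactly the number of common neighbours of the non-collinear pair $x_0,y$, namely $t+1$ by the previous paragraph. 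Therefore the number of points non-collinear with $x_0$ is $s(t+1)st/(t+1)=s^2t$, whence \[ v = 1 + s(t+1) + s^2 t = (s+1)(st+1). \]

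I expect no serious obstacle: the whole argument is bookkeeping once the no-triangle property and the observation about lines through a neighbour of $x_0$ are in place, and both are immediate consequences of the quadrangle axiom. The only point requiring a little care is keeping the ``closed neighbourhood'' versus ``neighbourhood'' distinction straight in the enumeration of $v$, so that no point is counted twice or omitted.
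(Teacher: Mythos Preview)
Your argument is correct and is the standard textbook derivation; the paper itself does not supply a proof at all, merely listing this as a classical result. So there is nothing in the paper to compare against, and your approach is the expected one.

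One thing you should flag, however: your own computation gives the valency as $k=s(t+1)$, whereas the statement as printed claims $k=(s-1)t$. These are different numbers, and yours is the correct one (for instance, in a $GQ(2,2)$ every point is collinear with $s(t+1)=6$ others, not $(s-1)t=2$). The stated $k=(s-1)t$ is simply a typo in the paper; since your task is to prove the statement, you should point out that the claimed value of $k$ is incorrect rather than silently deriving a different number and moving on.
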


We now look at subgraphs of the point graphs of generalised quadrangles.

\begin{theorem}[Cameron \cite{cameron75}]\label{thm:k4-e}
The point graph of a generalised quadrangle does not contain $K_4-e$
as an induced subgraph. Here, $K_4-e$ denotes the graph obtained by
removing one edge from the complete graph on 4 vertices.
\end{theorem}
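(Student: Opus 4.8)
My plan is to derive a contradiction directly from the defining axiom of a generalised quadrangle, using the observation that any common neighbour of two adjacent points of the point graph must already lie on the line joining them.

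Concretely, suppose for contradiction that the point graph of a $GQ(s,t)$ contained an induced $K_4-e$. I would label its four (pairwise distinct) vertices $a,b,c,d$ so that $cd$ is the unique missing edge; then $a\sim b$, each of $c$ and $d$ is collinear with both $a$ and $b$, and $c\not\sim d$. Since $a\sim b$, the points $a$ and $b$ lie on a common line $\ell$, and on only one such line because two distinct lines meet in at most one point. Now look at $c$: if $c\notin\ell$, then $c$ is a point off the line $\ell$ which is collinear with the two distinct points $a$ and $b$ of $\ell$, contradicting the generalised quadrangle axiom that exactly one point of $\ell$ is collinear with $c$; hence $c\in\ell$. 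The identical argument forces $d\in\ell$. But then $c$ and $d$ both lie on the line $\ell$, so they are collinear, i.e.\ $c\sim d$ in the point graph, contradicting $c\not\sim d$. Therefore no induced $K_4-e$ can occur.

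There is essentially no obstacle; the only point requiring care is the bookkeeping that $a,b,c,d$ are genuinely distinct, so that $c,d\notin\{a,b\}$ and the phrase ``$c$ is collinear with two distinct points of $\ell$'' is legitimate. If one prefers a more structural phrasing, the same content can be isolated as a short lemma: in the point graph of a generalised quadrangle every triangle is contained in a single line (proved by the same off\nobreakdash-line argument). Applying this to the triangles $abc$ and $abd$, which both then lie on the unique line through $a$ and $b$, immediately yields $c\sim d$ and the contradiction.
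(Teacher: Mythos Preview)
Your proof is correct and is the standard argument for this well-known fact. Note, however, that the paper does not actually prove this theorem: it is stated as a cited result attributed to Cameron~\cite{cameron75}, with no proof given in the text. So there is nothing in the paper to compare your argument against, but what you have written is exactly the expected elementary derivation from the GQ axiom and would be perfectly acceptable as a self-contained proof.
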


\begin{define}
A {\em triad} in a $GQ(s,t)$ is a triple $\{x,y,z\}$ of pairwise
non-collinear points. A {\em center} of a triad is a point collinear
to all three points of the triad.
\end{define}

\begin{theorem}[see \cite{PaT84}]\label{thm:centers}
For a generalised quadrangle of order $(s,t)$, we have $s^2\ge
t$. Equality holds if and only if each triad has exactly $s+1$ centers.
\end{theorem}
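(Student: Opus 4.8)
The plan is to run the classical variance (second-moment) argument of \cite{PaT84} over the triads that share a fixed pair of non-collinear points; this is Higman's inequality. We may assume the quadrangle is \emph{thick}, i.e.\ $s>1$ and $t>1$, since otherwise triads do not exist or the statement is vacuous. Fix two non-collinear points $x,y$ of the $GQ(s,t)$; they have exactly $\mu=t+1$ common neighbours, which we call the \emph{centres} of the pair $\{x,y\}$. For each point $w$ non-collinear with both $x$ and $y$ (so that $\{x,y,w\}$ is a triad), let $m_w$ be the number of centres of that triad; since every such centre is collinear with $x$ and with $y$ it is one of the $t+1$ centres of $\{x,y\}$, so $0\le m_w\le t+1$. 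Let $N$ be the number of such points $w$; a short inclusion--exclusion with the parameters of the point graph gives $N=s^2t-st-s+t$. The aim is to compute $\sum_w m_w$ and $\sum_w m_w(m_w-1)$ by double counting, and then to invoke the Cauchy--Schwarz inequality $\bigl(\sum_w m_w\bigr)^2\le N\sum_w m_w^2$.

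For $\sum_w m_w$ we count pairs $(c,w)$ with $c$ a centre of $\{x,y\}$ and $w$ a point non-collinear with $x$ and $y$ but collinear with $c$. Fixing $c$: two collinear points have $\lambda=s-1$ common neighbours, and no centre other than $c$ is collinear with $c$ (otherwise those two centres together with $x$ and $y$ would induce $K_4-e$, contradicting Theorem~\ref{thm:k4-e}); removing from the neighbourhood of $c$ the points $x,y$ and the points collinear with $x$ or with $y$ therefore leaves exactly $s(t-1)$ admissible $w$, so $\sum_w m_w=(t+1)\,s(t-1)=s(t^2-1)$. For $\sum_w m_w(m_w-1)$ we count triples $(c,c',w)$ with $c\neq c'$ centres of $\{x,y\}$ and $w$ as before collinear with both $c$ and $c'$. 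Theorem~\ref{thm:k4-e} is used twice: first $c\not\sim c'$ (else $\{x,y,c,c'\}$ induces $K_4-e$), so $c$ and $c'$ have $\mu=t+1$ common neighbours, two of which are $x$ and $y$; second, none of the other $t-1$ common neighbours is collinear with $x$ or with $y$ (such a $w$ together with $c$, $c'$ and $x$, or $y$, would again induce $K_4-e$). Hence each unordered pair of centres contributes $t-1$, so $\sum_w m_w(m_w-1)=2\binom{t+1}{2}(t-1)=t(t^2-1)$, and thus $\sum_w m_w^2=(s+t)(t^2-1)$.

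Feeding these into Cauchy--Schwarz gives $s^2(t^2-1)^2\le N(s+t)(t^2-1)$; dividing by $t^2-1$ and substituting the value of $N$, the inequality collapses to $0\le t(s-1)(s^2-t)$, and since $s,t>1$ this yields $s^2\ge t$. Equality holds if and only if it holds in Cauchy--Schwarz, i.e.\ all the $m_w$ are equal. Tracing this through the two identities shows that it occurs exactly when $s^2=t$, and in that case the common value of $m_w$ equals $\sum_w m_w/N=s+1$; conversely, if every triad has exactly $s+1$ centres then the $m_w$ are constant, Cauchy--Schwarz is an equality, and one reads off $s^2=t$. The thin cases $s=1$ and $t=1$ are handled directly.

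The step I expect to be the main obstacle is the bookkeeping in the second count: verifying that, for two distinct centres $c,c'$ of $\{x,y\}$, it is precisely the $t-1$ common neighbours of $c$ and $c'$ other than $x,y$ that are non-collinear with both $x$ and $y$. This is where the geometry genuinely enters, and Cameron's theorem (Theorem~\ref{thm:k4-e}) is exactly the tool that controls it; once these local facts are in place, the rest is routine algebra.
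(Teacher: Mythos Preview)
The paper does not actually prove this theorem: it is stated with a citation to \cite{PaT84} and used as a black box. So there is no ``paper's own proof'' to compare against. Your argument is the classical Higman variance computation exactly as it appears in Payne--Thas, and your bookkeeping is correct: the value $N=s^2t-st-s+t$, the first moment $\sum_w m_w=s(t^2-1)$, and the second factorial moment $\sum_w m_w(m_w-1)=t(t^2-1)$ all check out, and the algebra does collapse to $t(s-1)(s^2-t)\ge 0$ as you claim. The repeated appeals to Theorem~\ref{thm:k4-e} to control adjacencies among centres and their common neighbours are exactly the right tool, and your identification of that step as the crux is accurate.

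One small quibble: your dismissal of the thin cases is slightly glib. A $GQ(1,t)$ with $t\ge 2$ does contain triads, and the inequality $1=s^2\ge t$ is genuinely false there; the theorem as usually stated (and as intended in the paper) carries an implicit thickness hypothesis. You should say so explicitly rather than claiming the statement is vacuous.
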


\begin{corollary}\label{thm:gq-isoregular}
The point graph of a generalised quadrangle of order $(s,s^2)$ is
$3$-isoregular. 
\end{corollary}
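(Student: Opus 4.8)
The plan is to verify $k$-isoregularity directly for $k=3$, which means checking that for every set $S$ of at most three vertices of the point graph $\Gamma$ of a $GQ(s,s^2)$, the valency $val(S)$ depends only on the isomorphism type of the induced subgraph on $S$. For $|S|\le 2$ this is just strong regularity, which we already know holds with the stated parameters. So the real content is the case $|S|=3$, where there are four isomorphism types $\Delta_0,\Delta_1,\Delta_2,\Delta_3$ classified by the number of edges, and I must show each of $val(\Delta_0),val(\Delta_1),val(\Delta_2),val(\Delta_3)$ is a well-defined constant.

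First I would dispose of the types containing an edge. For $\Delta_3$ (a triangle $\{x,y,z\}$): in a partial linear space any two collinear points determine a unique line, and since there are no induced $K_4-e$ (Theorem~\ref{thm:k4-e}), a common neighbour of all three must in fact be collinear with all three in a way forcing all four points onto a single line; counting the remaining points of that line gives $val(\Delta_3)=s-1$. For $\Delta_2$ (a path, say $x\sim y$, $x\sim z$, $y\not\sim z$): again $K_4-e$-freeness is the key — a vertex adjacent to all of $x,y,z$ would, together with them, either create a forbidden $K_4-e$ or be forced into a configuration that cannot occur, so one checks $val(\Delta_2)=0$ (consistent with $\lambda_1=s-1$, $\mu_1$ being handled via the subconstituent picture, but the direct count is cleanest). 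For $\Delta_1$ (one edge $x\sim y$, with $z$ isolated): here $z$ is non-collinear to both $x$ and $y$; a common neighbour $w$ of all three is collinear to $x$ and $y$ (hence on line $xy$) and collinear to $z$, and the GQ axiom says the line $xy$ meets the perp of $z$ in exactly one point, so $val(\Delta_1)=1$.

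The main obstacle is $\Delta_0$, the independent triple $\{x,y,z\}$ — that is, a triad. Its valency is exactly the number of centers of the triad, and this is precisely where the hypothesis $t=s^2$ bites: by Theorem~\ref{thm:centers}, a $GQ(s,t)$ satisfies $s^2\ge t$ with equality if and only if every triad has exactly $s+1$ centers. Thus for $GQ(s,s^2)$ we get $val(\Delta_0)=s+1$ as a genuine constant, independent of the triad chosen. This is the one place the argument would fail for a general $GQ(s,t)$, and it is the reason the corollary is stated for order $(s,s^2)$.

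Finally I would assemble the pieces: strong regularity gives the constancy of $val(S)$ for $|S|\le 2$, the three edge-containing types give constants $s-1$, $0$ (or the appropriate value), and $1$, and the triad case gives $s+1$ by Theorem~\ref{thm:centers}. Since in each case the valency depends only on the isomorphism class of the induced subgraph, $\Gamma$ is $3$-isoregular by definition. I would remark that one could alternatively route the proof through the equivalence ``$3$-isoregular iff both subconstituents are strongly regular with parameters independent of the base point,'' verifying the subconstituents of $\Gamma$ are themselves point graphs of (or closely related to) generalised quadrangles; but the direct four-case check above is shorter and makes the role of the $s^2=t$ hypothesis transparent.
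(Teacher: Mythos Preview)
Your approach is essentially identical to the paper's: directly check the four isomorphism types of $3$-vertex subgraphs, using Theorem~\ref{thm:centers} for the triad, the GQ axiom for $\Delta_1$, $K_4-e$-freeness for $\Delta_2$, and collinearity for $\Delta_3$. One arithmetic slip to fix: a line has $s+1$ points, so for a triangle $\{x,y,z\}$ on that line the remaining common neighbours number $s+1-3=s-2$, not $s-1$.
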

\begin{proof}
The isomorphism class of a graph of order $3$ is uniquely determined
by the number of edges. By Theorem \ref{thm:centers}, each graph with
0 edges has $s+1$ common neighbours. By Theorem \ref{thm:k4-e}, each
graph with 2 edges has no common neighbours. 

If the graph has 1 edge, two of the points are connected by a
line. The third point has exactly one neighbour on this line, which is
the unique common neighbour of all three points. Thus, a graph with 1
edge has one common neighbour.

If the graph has 3 edges, it is complete, and hence contained in a
line. The common neighbours of the three points are the remaining $s-2$
points on this line.
\end{proof}
\section[Point graphs of generalised quadrangles]{Proof of Theorem 1}
\label{sec:t-vert-gq}
\subsection{Goals and strategy}

In this section we prove Theorem~\ref{thm:5vc-gq}, which states that the
point graph of a generalised quadrangle satisfies the $5$-vertex condition.
For generalised quadrangles of order $(s,s^2)$, this has been
previously shown by A.~Pasini (unpublished \cite{Pas91}). Although
the proof presented here is independent, the result in \cite{Pas91} showed that
generalised quadrangles yield a class of strongly regular graphs which should be further
investigated. Also, some of the techniques used by Pasini helped to obtain
the result presented here.

By Theorem \ref{thm:t-vertex} we have  to count graphs
with minimal valency 3 and order 5. In other words, if $x,y$ are the
fixed vertices, and $a,b,c$ the additional vertices, we need to
consider the graphs in which each of $a,b,c$ is non-adjacent to at most
one vertex.

If we take the complements of these graphs, we get that the valency of
$a,b,c$ is at most 1. If we discard a possible edge $(x,y)$, that
implies that the size of the graph, i.e., the number of its edges, is
at most 3. Thus, we use the following strategy:
\begin{enumerate}
\item Enumerate all graphs of order 5 and size $i=0,1,2,3$ (Subsection
  \ref{sub:graphs5});
\item Discard those graphs which cannot appear as subgraphs of
  generalised quadrangles (Subsection \ref{sub:easyDiscard});
\item For the few remaining graph types, check that their numbers do
  not depend on the choice of the edge or non-edge $(x,y)$ (Subsection
  \ref{sub:proof5vc-gq}). 
\end{enumerate}

\subsection{Graph types relevant to the 5-vertex condition}
\label{sub:graphs5}
We start by determining all graph types which need to be considered in
order to check whether a given graph satisfies the 5-vertex
condition. As stated above, the size of the complements of these graph
types is bounded by 3, in other words, the complements contain at most
3 edges.

In order to be sure not to miss anything, we will completely enumerate
these complements. In the
following we enumerate the relevant graphs up to isomorphism
under the group $S(\{x,y\})\times S(\{a,b,c\})$ of order 12, counting
also the cardinalities of the related isomorphism classes. We will
not consider the set $\{x,y\}$ as a possible edge. This leaves
$\binom{5}{2}-1 = 9$ possible edges.

We denote graphs by a list of its edges.
\begin{description}
\item[Graphs of size 0] $\quad$

 The empty graph is uniquely determined by its
  size. 
\item[Graphs of size 1] $\quad$

\begin{enumerate}
\item[a.] $\{x,a\}$, $Aut = \left<(b,c)\right>, |Aut| = 2$, length of orbit: 6.
\item[b.] $\{a,b\}$, $Aut = \left<(a,b), (x,y)\right>, |Aut| = 4$, length of orbit: 3.
\end{enumerate}

This accounts for $6+3=9=\binom{9}{1}$ graphs.
\item[Graphs of size 2] $\quad$

\begin{enumerate}
\item[a.] $\{x,a\},\{x,b\}$, $Aut = \left<(a,b)\right>, |Aut| = 2$, length
  of orbit: 6.
\item[b.] $\{x,a\},\{y,b\}$, $Aut = \left<(a,b)(x,y)\right>, |Aut|=2$,
  length of orbit: 6. 
\item[c.] $\{x,a\},\{b,c\}$, $Aut = \left<(b,c)\right>, |Aut|=2$,
  length of orbit: 6. 
\item[d.] $\{x,a\},\{y,a\}$, $Aut = \left<(x,y),(b,c)\right>, |Aut|=4$,
  length of orbit: 3. 
\item[e.] $\{x,a\},\{a,b\}$, $Aut = \left<e\right>, |Aut|=1$,
  length of orbit: 12. 
\item[f.] $\{a,b\},\{b,c\}$, $Aut = \left<(a,c),(x,y)\right>, |Aut|=4$,
  length of orbit: 3. 
\end{enumerate}
This accounts for $6+6+3+12+6+3=36=\binom{9}{2}$ graphs.
\item[Graphs of size 3] $\quad$

\begin{enumerate}
\item[a.] $\{x,a\},\{x,b\},\{x,c\}$, $Aut=S(\{a,b,c\}$, $|Aut| = 6$,
  length of orbit: 2.
\item[b.] $\{x,a\},\{x,b\},\{y,c\}$, $Aut=\left<(a,b)\right>$, $|Aut| = 2$,
  length of orbit: 6.
\item[c.] $\{x,a\},\{x,b\},\{y,a\}$, $Aut=\left<e\right>$, $|Aut| = 1$,
  length of orbit: 12.
\item[d.] $\{x,a\},\{x,b\},\{a,b\}$, $Aut=\left<(a,b)\right>$, $|Aut| = 2$,
  length of orbit: 6.
\item[e.] $\{x,a\},\{x,b\},\{a,c\}$, $Aut=\left<e\right>$, $|Aut| = 1$,
  length of orbit: 12.
\item[f.] $\{x,a\},\{y,a\},\{a,b\}$, $Aut=\left<(x,y)\right>$, $|Aut| = 2$,
  length of orbit: 6.
\item[g.] $\{x,a\},\{y,a\},\{b,c\}$, $Aut=\left<(x,y),(b,c)\right>$, $|Aut| = 4$,
  length of orbit: 3.
\item[h.] $\{x,a\},\{y,b\},\{a,b\}$, $Aut=\left<(a,b)(x,y)\right>$, $|Aut| = 2$,
  length of orbit: 6.
\item[i.] $\{x,a\},\{y,b\},\{a,c\}$, $Aut=\left<e\right>$, $|Aut| = 1$,
  length of orbit: 12.
\item[j.] $\{x,a\},\{a,b\},\{a,c\}$, $Aut=\left<(b,c)\right>$, $|Aut| = 2$,
  length of orbit: 6.
\item[k.] $\{x,a\},\{a,b\},\{b,c\}$, $Aut=\left<e\right>$, $|Aut| = 1$,
  length of orbit: 12.
\item[l.] $\{a,b\},\{a,c\},\{b,c\}$, $Aut=\left<(x,y),(a,b),(a,b,c)\right>$, $|Aut| = 12$,
  length of orbit: 1.
\end{enumerate}
This accounts for $2 +4\cdot 12 + 5\cdot 6 + 3+1 = 84 = \binom{9}{3}$
graphs. This check sum confirms that our enumeration is complete. 
\end{description}

The following graphs can be discarded because one of the additional
vertices has valency greater than 1: 2.d--f, 3.c--l

This can be summarised as follows (recall that above we enumerated the
complements of the relevant graphs):
\begin{theorem}
If a graph $\Gamma$ satisfies the 4-vertex condition, then to check
the 5-vertex condition it is sufficient to count the graphs of the eight
types given in Table \ref{tbl:5-cond-types}. Here, a dashed line
indicates an optional edge connecting the fixed vertices.
\end{theorem}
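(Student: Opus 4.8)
The plan is to read off the result directly from the enumeration that was just carried out, combined with Corollary~\ref{cor:t-vertex}. Recall that the point graph of a generalised quadrangle is strongly regular by Theorem on $GQ$ parameters, hence $2$-isoregular, and that it satisfies the $4$-vertex condition by Higman's theorem (\cite{Hig71}). Thus if it fails the $5$-vertex condition, Corollary~\ref{cor:t-vertex} (applied with $k=2$, $t=5$) guarantees a witnessing graph type $(T,x,y)$ of order $5$ in which every additional vertex has valency at least $3$. Passing to complements, this becomes a graph type on five vertices in which $x_0,y_0$ are non-adjacent (we discard the $\{x,y\}$ edge throughout, as stipulated) and every additional vertex has valency at most $1$ in the complement; discarding the optional $x_0y_0$ edge, such a complement has at most $3$ edges.

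The next step is simply to collect the results of the size-$0$, size-$1$, size-$2$, and size-$3$ enumerations above and retain only those graph types all of whose additional vertices $a,b,c$ have valency at most $1$. From the explicit lists this leaves: the empty graph (size $0$); types 1.a and 1.b (size $1$); types 2.a, 2.b, 2.c (size $2$); and types 3.a, 3.b (size $3$) --- the types 2.d--f and 3.c--l each contain an additional vertex of valency $\ge 2$ and are therefore irrelevant. Counting, that is $1+2+3+2 = 8$ graph types, which are exactly the ones to be displayed in Table~\ref{tbl:5-cond-types}. The check sums $9=\binom{9}{1}$, $36=\binom{9}{2}$, $84=\binom{9}{3}$ confirm that the enumeration of complements is exhaustive, so no relevant type has been missed.

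The only genuine content beyond bookkeeping is the invocation of Theorem~\ref{thm:t-vertex}: one must be sure that it applies here, i.e.\ that the point graph is indeed $2$-isoregular (strongly regular) and satisfies the $4$-vertex condition, so that the ``minimal valency $\ge k+1 = 3$'' reduction is legitimate. Both facts are available from the earlier sections, so this is immediate. The step I expect to require the most care in the writeup --- though it is still routine --- is making the complementation bijection between the eight surviving complement types and the eight actual graph types of interest fully explicit, and describing the dashed-edge convention precisely, since the count of graphs of a given type w.r.t.\ an \emph{edge} $(x,y)$ versus a \emph{non-edge} $(x,y)$ is handled by allowing the optional $x_0y_0$ edge in $T$. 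After that, the theorem follows: to verify the $5$-vertex condition it suffices to check that, for each of these eight types, the number of induced subgraphs of that type through a pair $(x,y)$ depends only on whether $(x,y)$ is an edge or a non-edge, which is precisely what Subsection~\ref{sub:proof5vc-gq} will carry out.
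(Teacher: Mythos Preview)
Your enumeration and filtering match the paper's argument exactly, and the eight surviving types are the correct ones. However, you have misread the hypothesis of the theorem. The statement is about an \emph{arbitrary} graph $\Gamma$ satisfying the $4$-vertex condition, not about point graphs of generalised quadrangles; the GQ-specific work only begins in Subsection~\ref{sub:easyDiscard}. So your first paragraph is off target: you do not need Higman's theorem to obtain the $4$-vertex condition (it is the hypothesis), and you should not appeal to the GQ parameter theorem for strong regularity. Instead, the $4$-vertex condition implies the $3$-vertex condition, which is equivalent to strong regularity, which is $2$-isoregularity; that is all that is needed to invoke Theorem~\ref{thm:t-vertex} with $k=2$. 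Once you replace the GQ-specific justifications by this one-line deduction from the hypothesis, your proof coincides with the paper's.
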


\begin{table}
\begin{center}
\begin{picture}(0,0)%
\includegraphics{types.pstex}%
\end{picture}%
\setlength{\unitlength}{2763sp}%
\begingroup\makeatletter\ifx\SetFigFont\undefined%
\gdef\SetFigFont#1#2#3#4#5{%
  \reset@font\fontsize{#1}{#2pt}%
  \fontfamily{#3}\fontseries{#4}\fontshape{#5}%
  \selectfont}%
\fi\endgroup%
\begin{picture}(6766,6608)(218,-5986)
\end{picture}

\end{center}
\caption{Types to check for 5-vertex condition}
\label{tbl:5-cond-types}
\end{table}

\subsection{Easily discarded cases}
\label{sub:easyDiscard}
We now proceed to apply the results obtained above to point graphs of
generalised quadrangles. In this case, most of these graph types can be
discarded since they contain a subgraph $K_4-e$, which cannot happen
in a generalised quadrangle (see Theorem \ref{thm:k4-e}).
\begin{lemma}
In a generalised quadrangle, the following graph types do not appear:
1a, 1b, 2b, 2c, 3b.
\end{lemma}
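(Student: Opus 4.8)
The plan is to show that each of the five listed types, realised as an induced subgraph, would force an induced copy of $K_4-e$, which is impossible in the point graph of a generalised quadrangle by Cameron's theorem (Theorem~\ref{thm:k4-e}). Since a graph type ``appears'' precisely when $\Gamma$ has an induced subgraph isomorphic to it, exhibiting such a forbidden configuration inside each type settles the lemma.

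Recall that in Subsection~\ref{sub:graphs5} the labels $1a$, $1b$, $2b$, $2c$, $3b$ were attached to the \emph{complements} of the relevant graph types, so each type is recovered by complementing the given edge-list within the nine pairs that avoid $\{x,y\}$, the pair $\{x,y\}$ itself remaining optional. I would treat the five cases one at a time, in each exhibiting a four-element set $S\subseteq\{x,y,a,b,c\}$ such that the subgraph of the type induced on $S$ is $K_4-e$; equivalently, $S$ contains exactly one of the edges of the complement. For type $1a$, whose complement is the single edge $\{x,a\}$, the set $S=\{x,a,b,c\}$ works, the only non-edge of the type inside $S$ being $\{x,a\}$. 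For $1b$ one may again take $S=\{x,a,b,c\}$ (sole non-edge $\{a,b\}$); for $2b$, $S=\{x,a,b,c\}$ (non-edge $\{x,a\}$); and for $2c$ and $3b$, $S=\{y,a,b,c\}$ (non-edges $\{b,c\}$ and $\{y,c\}$ respectively). The one point worth highlighting is that in each of the five cases $S$ can be chosen to omit one of the two fixed vertices $x,y$; this is exactly what makes the conclusion independent of whether the optional edge $\{x,y\}$ is present.

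There is no real obstacle beyond this bookkeeping; the only subtlety is keeping the complementation convention and the optional edge straight, which is handled by the remark just made. For perspective it is worth noting why the same argument does \emph{not} eliminate the three surviving types (the empty type, $2a$, and $3a$): for those, inserting the optional edge $\{x,y\}$ can create a $K_4-e$ where there was none, so their edge-free incarnations genuinely occur in generalised quadrangles, and these types must instead be handled by the direct count of Subsection~\ref{sub:proof5vc-gq}.
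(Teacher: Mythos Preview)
Your proposal is correct and matches the paper's own proof essentially verbatim: both exhibit, for each of the five types, a four-vertex subset inducing $K_4-e$, and you pick exactly the same subsets ($\{x,a,b,c\}$ for 1a, 1b, 2b and $\{y,a,b,c\}$ for 2c, 3b). Your additional remark that each chosen $S$ omits one of $x,y$, making the argument independent of the optional edge $\{x,y\}$, is precisely the paper's closing observation.
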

\begin{proof}
In the pictures of Table \ref{tbl:5-cond-types}, we denote the fixed
vertices left to right by $x,y$, and the additional vertices left to
right by $a,b,c$. For each of the graph types, we note a set of
vertices which induces a $K_4-e$:
\begin{description}
\item[1a:] $x,a,b,c$
\item[1b:] $x,a,b,c$
\item[2b:] $x,a,b,c$
\item[2c:] $y,a,b,c$
\item[3b:] $y,a,b,c$
\end{description}
So these types do not occur, independent  of whether $x$ and $y$ are
adjacent. 
\end{proof}

\subsection{The remaining cases}
\label{sub:proof5vc-gq}
This leaves us to check the following types: 0, 2a, 
3a.
 
We will now show that the numbers for these subgraphs are uniquely
determined by the axioms of generalised quadrangles.
We will arrange these verifications in three lemmas, one for
each type.

\begin{lemma}
The number of graphs of type 0 w.r.t. $x$ and $y$ is $\binom{s-1}{3}$
if $x\sim y$, 0 otherwise.
\end{lemma}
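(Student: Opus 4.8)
The plan is to reduce the count to an elementary combinatorial choice, using the fact that in the point graph of a generalised quadrangle every clique lies on a single line. A subgraph of type~$0$ with respect to $x$ and $y$ is an induced subgraph on a five-element set $\{x,y,a,b,c\}$ in which every pair except possibly $\{x,y\}$ is an edge; thus it is a copy of $K_5$ when $x\sim y$, and a non-edge $\{x,y\}$ joined to a triangle $\{a,b,c\}$ when $x\not\sim y$. First I would record the auxiliary fact that every triangle, and hence every clique, of the point graph lies on a line: if $p\sim q\sim r\sim p$ but $r$ were not on the line $\ell$ through $p$ and $q$, then $\ell$ would contain two distinct points collinear with $r$, contradicting the defining axiom of a $GQ$; this is also an instance of Cameron's Theorem~\ref{thm:k4-e}. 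I also use that a line through two given collinear points is unique, since two lines meet in at most one point.

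For the case $x\sim y$, the two points lie on a unique common line $\ell$, which has $s+1$ points. Any type-$0$ subgraph on $\{x,y,a,b,c\}$ is a clique, hence contained in a line, and that line contains $x$ and $y$, so it must be $\ell$. Therefore $\{a,b,c\}$ is exactly a three-element subset of $\ell\setminus\{x,y\}$, a set of size $s-1$; conversely every such subset produces a $K_5$. Counting these subsets gives $\binom{s-1}{3}$.

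For the case $x\not\sim y$, a type-$0$ subgraph would force $\{a,b,c\}$ to be a triangle, hence to lie on some line $\ell$, and would force $x$ (and likewise $y$) to be collinear with all three of $a,b,c$. If $s\ge 2$ these are three distinct points of $\ell$, so $x$ is collinear with more than one point of $\ell$; the $GQ$ axiom then forces $x\in\ell$, and similarly $y\in\ell$, whence $x\sim y$, a contradiction. If $s\le 1$ there is no triangle at all. Either way the count is $0$. There is no real obstacle in this lemma — it is the simplest of the three remaining types; one only needs to invoke the clique-on-a-line fact carefully (including uniqueness of the line through two points) and to check that the binomial formula is literally correct for the small values of $s$.
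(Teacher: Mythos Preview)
Your argument is correct and follows essentially the same route as the paper: both use that cliques in the point graph of a $GQ$ lie on a single line (equivalently, that no induced $K_4-e$ occurs) to force $a,b,c$ onto the line through $x$ and $y$ when $x\sim y$, and to derive a contradiction when $x\not\sim y$. The paper is simply terser, invoking Theorem~\ref{thm:k4-e} directly rather than spelling out the geometric contradiction as you do.
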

\begin{proof}
If $x\not \sim y$, then the graph contains a $K_4-e$. Otherwise, each
additional vertex is adjacent to both $x$ and $y$ and hence lies on
the line connecting them. Each set of three points on this line yields
a graph of type 0.
\end{proof}

\begin{lemma}
The number of graphs of type 2a is 0 if $x\sim y$. Otherwise, it is
$(t+1)\binom{s-1}{2}$. 
\end{lemma}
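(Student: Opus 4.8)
The plan is first to unwind what ``type 2a'' denotes. Since the enumeration in Subsection~\ref{sub:graphs5} listed \emph{complements}, the graph type $(T,x_0,y_0)$ here is the complement — on the vertex set $\{x_0,y_0,a,b,c\}$, ignoring the pair $\{x_0,y_0\}$ — of the graph with edge set $\{x_0a,x_0b\}$. So in an induced copy on vertices $x,y,a,b,c$ with $x_0\mapsto x$ and $y_0\mapsto y$ we have $a\not\sim x$ and $b\not\sim x$, while all of the pairs $ab$, $ac$, $bc$, $ay$, $by$, $cx$, $cy$ are edges, and $\{x,y\}$ is the optional (dashed) pair. I will also use that $\mathrm{Aut}(T)=\langle(a,b)\rangle$: the two additional vertices not adjacent to $x$ are interchangeable and will be counted as an unordered pair, whereas the remaining additional vertex (the one adjacent to $x$) is distinguished by the copy.

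For the case $x\sim y$ I will show the count is $0$ by exhibiting a forbidden configuration. Among the four vertices $x,y,c,a$ every pair is an edge except $xa$, so they induce a $K_4-e$, which by Theorem~\ref{thm:k4-e} cannot occur in the point graph of a generalised quadrangle. Hence no copy of type 2a exists when $x\sim y$.

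For the case $x\not\sim y$ I will count directly. The vertex $c$ is a common neighbour of the non-adjacent pair $\{x,y\}$, and there are exactly $\mu=t+1$ of these; fix one. Let $\ell$ be the unique line through $c$ and $y$ (it exists since $c\sim y$). Because $\{a,c,y\}$ and $\{b,c,y\}$ are triangles in the point graph they must be collinear, so $a,b\in\ell$; and since $x\not\sim y$ we have $x\notin\ell$, with $c$ the unique point of $\ell$ collinear with $x$. Conversely, any two distinct points $a,b$ of $\ell\setminus\{c,y\}$ automatically satisfy all the required adjacencies (all of $a,b,c,y$ lie on $\ell$) and non-adjacencies ($a,b\ne c$ forces $a,b\not\sim x$), and are automatically distinct from $x,y,c$. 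So a copy of type 2a corresponds exactly to a choice of such a $c$ together with an unordered pair of points of $\ell\setminus\{c,y\}$; since $|\ell\setminus\{c,y\}|=s-1$ this yields $(t+1)\binom{s-1}{2}$ copies, and distinct choices give distinct vertex sets because the vertex playing the role of $c$ is recovered from the copy as the unique additional vertex adjacent to $x$.

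I do not expect a real obstacle here: the two facts carrying the argument are ``a triangle in the point graph lies on a line'' and ``off a line there is a unique collinear point'', together with Cameron's $K_4-e$ theorem. The only point needing care is the bookkeeping of the roles of $a,b,c$ inside $T$ — using $\binom{s-1}{2}$ rather than an ordered count, and confirming that there is neither over- nor under-counting.
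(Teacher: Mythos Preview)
Your proof is correct and essentially the same as the paper's. The only cosmetic difference is the parametrisation in the non-adjacent case: the paper observes that $\{y,a,b,c\}$ is a clique and hence lies on a single line $l$ through $y$ (giving $t+1$ choices), after which $c$ is forced as the unique neighbour of $x$ on $l$; you instead first pick $c$ among the $\mu=t+1$ common neighbours of $x$ and $y$ and then recover $l$ as the line through $c$ and $y$. These parametrisations are in bijection, and the remaining choice of $\{a,b\}\subseteq l\setminus\{c,y\}$ is identical.
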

\begin{proof}
If $x\sim y$, then $\{x,y,b,c\}$ induces a $K_4-e$. Let $x\not\sim
y$. The points $\{y,a,b,c\}$ form a clique, hence lie on a line. Thus
to find such a graph, we choose a line $l$ through $y$, with $t+1$
possibilities. The point $c$ is the unique neighbour of $x$ on $l$; for
$a$ and $b$ we can choose any two of the remaining points on $l$.
\end{proof}
\begin{lemma}
The number of graphs of type 3a is $t\binom{s}{3}$ if $x\sim y$,
$(t+1) \binom{s-1}{3}$ otherwise.
\end{lemma}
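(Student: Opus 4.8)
The plan is to follow the template of the two preceding lemmas. As a graph on the five vertices $x,y,a,b,c$ with $\{x,y\}$ an optional edge, type 3a is characterised by the additional vertices $a,b,c$ together with $y$ being pairwise adjacent, while $x$ is adjacent to none of $a,b,c$. The first step is to record the elementary fact that in the point graph of any generalised quadrangle, four pairwise collinear points lie on a common line: if $a\sim b$ they lie on a unique line $l$, and any point collinear with both $a$ and $b$ must lie on $l$, since a point off $l$ is collinear with exactly one point of $l$. Applying this to the clique $\{y,a,b,c\}$ shows that these four points lie on a single line $l$, which necessarily passes through $y$.

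The second step is to locate $x$ relative to $l$. In either case $x\notin l$: if $x\sim y$, then $x\in l$ would force $x$ collinear with $a,b,c$, contrary to the type; if $x\not\sim y$, then $x\in l$ is impossible because $y\in l$. Hence, by the quadrangle axiom, there is a unique point $p$ of $l$ collinear with $x$.

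Then I would split into the two cases in the statement. If $x\sim y$, then $p=y$, since $y$ lies on $l$ and is collinear with $x$; conversely, running the construction backwards, a type-3a subgraph is obtained by choosing a line $l'$ through $y$ distinct from the line $l_{xy}$ joining $x$ and $y$ (there are $t$ such lines, as $l_{xy}$ is one of the $t+1$ lines through $y$), together with an unordered triple $\{a,b,c\}$ among the $s$ points of $l'\setminus\{y\}$. Because $x\notin l'$ and $y$ is the unique point of $l'$ collinear with $x$, every such choice gives exactly type 3a, and all type-3a configurations arise this way, yielding $t\binom{s}{3}$. If instead $x\not\sim y$, then each of the $t+1$ lines $l'$ through $y$ is admissible, but on each such $l'$ the foot $p'$ of $x$ is a point different from $y$, so the triple $\{a,b,c\}$ must be chosen among the $s-1$ points of $l'\setminus\{y,p'\}$, giving $(t+1)\binom{s-1}{3}$.

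The only point requiring care---and it is a matter of bookkeeping rather than a real obstacle---is keeping straight which point of each line through $y$ is forbidden: in the adjacent case the entire line $l_{xy}$ is excluded (one fewer line, but a full $s$ usable points on each remaining line), whereas in the non-adjacent case every line through $y$ is used but each loses the single point collinear with $x$. Once the concurrency fact of the first step is established, the rest is immediate, and one should also check in passing that the five chosen points are distinct, which is automatic since $a,b,c$ are picked distinct from $y$ and from each other, and $x\notin l$.
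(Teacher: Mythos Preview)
Your proof is correct and follows essentially the same approach as the paper's own proof: identify $\{y,a,b,c\}$ as a clique, hence contained in a line through $y$, and then count the admissible lines and the admissible triples on each according to whether $x\sim y$. Your version is in fact slightly more careful than the paper's, since you explicitly justify why the four-clique is collinear and you correctly separate out that in the non-adjacent case one must exclude both $y$ and the foot $p'$ of $x$ on the line (the paper's phrase ``three points on that line that are not adjacent to $x$'' tacitly assumes $y$ is also excluded).
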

\begin{proof}
The vertices $\{y,a,b,c\}$ form a clique, hence are collinear. If
$x\sim y$, we can choose any line through $y$ but not through $x$;
there are $t$ such lines. We need to choose 3 other points on this
line; this gives us $\binom{s}{3}$ choices.

If $x\not\sim y$, then we can choose any line through $y$ ($t+1$
possibilities) and then choose any three points on that line that are
not adjacent to $x$, giving $\binom{s-1}{3}$ choices.
\end{proof}

We see that for all the graph types we have to check, the numbers do
not depend on the particular choice of $x$ and $y$. Together with 
Theorem \ref{thm:t-vertex}, this proves Main Theorem \ref{thm:5vc-gq}.

\section{Proof of Theorem 2}
\subsection{Goals and preliminaries}

For the remainder of the text, we concentrate on point graphs of
generalised quadrangles of order $(s, s^2)$. Thus, let $s$ be fixed,
and let $\Gamma$ be the point graph of such a generalised quadrangle. 

Suppose that $\Gamma$ does not satisfy the $t_0$-vertex condition
for some value of $t_0$. We may assume that $t_0$ is minimal with this
property. Thus there is a graph type $(T, x_0, y_0)$ of order
$t_0$ such
that the number of graphs of type $T$ with respect to $x$ and $y$
does depend on the choice of $x$ and $y$. We may further assume that
$T$ is maximal, i.e., adding an edge to $T$ leads to a graph type
such that the corresponding number does not depend on the choice of
the fixed vertices. 

We will try to find a lower bound on $t_0$; this will yield the result
that $\Gamma$ satisfies the $(t_0-1)$-vertex condition.

First, let us collect some properties of $T$.
\begin{proposition}
We may assume that the  valency of any additional vertex in $T$ is
at least 4. 
\end{proposition}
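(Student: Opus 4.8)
The plan is to invoke the reduction machinery already established in the excerpt. Recall that $\Gamma$, the point graph of a $GQ(s,s^2)$, is $3$-isoregular by Corollary~\ref{thm:gq-isoregular}, and by Theorem~\ref{thm:5vc-gq} it satisfies the $5$-vertex condition; since we have chosen $t_0$ minimal such that the $t_0$-vertex condition fails, $\Gamma$ satisfies the $(t_0-1)$-vertex condition and we must have $t_0\ge 6$. Now I apply Corollary~\ref{cor:t-vertex} with $k=3$: because $\Gamma$ is $3$-isoregular and satisfies the $(t_0-1)$-vertex condition but not the $t_0$-vertex condition, there exists a graph type $(T,x_0,y_0)$, witnessing the failure, in which every additional vertex has valency at least $k+1=4$. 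This is exactly the claimed statement, so at this level the proof is essentially a one-line citation.

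The one subtlety to address is the interaction between this valency-$4$ bound and the earlier maximality assumption on $T$ (that adding any edge to $T$ yields a type whose count is choice-independent). I would note that Theorem~\ref{thm:t-vertex} and its corollary are proved precisely by taking a counterexample type of maximal size and deleting a low-valency additional vertex; so one may take the type $T$ furnished by Corollary~\ref{cor:t-vertex} and then, among all bad types of that same order $t_0$ all of whose additional vertices have valency $\ge 4$, choose one that is edge-maximal. Adding any edge either keeps all additional valencies $\ge 4$ — in which case maximality of the count-failing type forbids it — or the new type still has a bad count and still has all additional valencies $\ge 4$ (adding edges only raises valencies), contradicting our choice. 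Hence both properties can be assumed simultaneously, and the proposition stands as stated.

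The main (and really only) obstacle is purely bookkeeping: making sure that the quantity being bounded is the valency of an \emph{additional} vertex and not of a fixed vertex $x_0$ or $y_0$, and that ``valency'' here means valency \emph{within} $T$, not within $\Gamma$. Corollary~\ref{cor:t-vertex} is phrased exactly this way, so no genuine work is needed; I would simply state the two facts ($3$-isoregularity from Corollary~\ref{thm:gq-isoregular}, the $5$-vertex condition from Theorem~\ref{thm:5vc-gq}, hence $t_0\ge 6$) and then quote Corollary~\ref{cor:t-vertex} with $k=3$, remarking that the edge-maximality of $T$ can be retained by the argument above. The payoff is that in all subsequent case analysis one only ever has to consider additional vertices of $T$ that are non-adjacent, within $T$, to at most $t_0-3$ of the other vertices.
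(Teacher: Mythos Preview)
Your argument is correct and follows the same approach as the paper: cite $3$-isoregularity from Corollary~\ref{thm:gq-isoregular} and apply Corollary~\ref{cor:t-vertex} with $k=3$. The paper's proof is literally a one-line citation of these two facts; your additional discussion of compatibility with the edge-maximality assumption on $T$ is sound but goes beyond what the paper records.
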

\begin{proof}
Since $\Gamma$ is 3-isoregular by Corollary \ref{thm:gq-isoregular},
this follows from Corollary \ref{cor:t-vertex}.
\end{proof}

\begin{lemma}
$T$ does not contain an induced subgraph isomorphic to $K_4-e$.
\end{lemma}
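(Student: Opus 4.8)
The plan is to show that an induced $K_4-e$ in $T$ would already violate Theorem~\ref{thm:k4-e}, using the fact that $T$ embeds (as an induced subgraph) into some induced subgraph of $\Gamma$. Recall that $(T,x_0,y_0)$ is a graph type realized in $\Gamma$: since the number of induced subgraphs of type $T$ with respect to some pair $(x,y)$ is nonzero (indeed this number \emph{varies} with the pair, so it is positive for at least one pair), there is an isomorphic copy of $T$ sitting inside $\Gamma$ as an induced subgraph. If $T$ contained an induced $K_4-e$, then that copy of $K_4-e$ would be an induced subgraph of $\Gamma$, contradicting Cameron's theorem (Theorem~\ref{thm:k4-e}) that the point graph of a generalised quadrangle contains no induced $K_4-e$.

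Concretely, I would argue as follows. First I would recall the standing assumptions: $\Gamma$ is the point graph of a $GQ(s,s^2)$, $t_0$ is minimal such that $\Gamma$ fails the $t_0$-vertex condition, and $(T,x_0,y_0)$ is a graph type of order $t_0$ whose realization count depends on the choice of fixed vertices. In particular there exists a pair $(x,y)$ of vertices of $\Gamma$ and an induced subgraph $\Delta\subseteq\Gamma$ of type $T$; fix an isomorphism $\phi\colon T\to\Delta$. Now suppose for contradiction that $T$ has four vertices $v_1,v_2,v_3,v_4$ inducing a $K_4-e$ in $T$. Since $\Delta$ is an \emph{induced} subgraph of $\Gamma$, and $\phi$ is a graph isomorphism, the four points $\phi(v_1),\ldots,\phi(v_4)$ induce in $\Gamma$ exactly the subgraph $\phi$ carries the induced subgraph of $T$ on $\{v_1,\ldots,v_4\}$ to — namely a $K_4-e$. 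This contradicts Theorem~\ref{thm:k4-e}, so no such four vertices exist, i.e.\ $T$ is $(K_4-e)$-free as an induced subgraph.

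The main (and only) subtlety worth spelling out is the ``induced'' bookkeeping: one must be careful that the type-$T$ subgraph of $\Gamma$ is induced, so that adjacencies and non-adjacencies in $T$ are faithfully reproduced in $\Gamma$. This is exactly how a ``graph type'' is defined in Section~\ref{sec:t-vertex} (the witness $\phi\colon T\to\Delta$ is an isomorphism onto an \emph{induced} subgraph $\Delta$), so the argument is essentially immediate once that definition is unwound. There is no computation to grind through; the lemma is a one-line consequence of Cameron's theorem together with the observation that $T$ occurs as an induced subgraph of $\Gamma$. I would therefore keep the proof to two or three sentences.
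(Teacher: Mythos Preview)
Your argument is correct and is exactly the paper's approach: the paper's proof reads in its entirety ``Follows from Theorem~\ref{thm:k4-e} and the fact that $T$ is an induced subgraph of $\Gamma$.'' Your write-up simply makes explicit why $T$ must occur as an induced subgraph of $\Gamma$ (the realization count is nonzero for some pair), which is the only point worth spelling out.
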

\begin{proof}
Follows from Theorem \ref{thm:k4-e} and the fact that $T$ is an
induced subgraph of $\Gamma$.
\end{proof}

\begin{proposition}\label{prop:complete-empty}
Let $X$ be the set of vertices adjacent to both $x$ and $y$ in
$T$. If $x\sim y$, then the subgraph induced by $X$ is complete. If
$x\not\sim y$, then the subgraph induced by $X$ is empty.
\end{proposition}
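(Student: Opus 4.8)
The plan is to use the $K_4-e$-freeness of $\Gamma$ (and hence of $T$), together with the basic geometry of generalised quadrangles, to control the internal adjacencies among the common neighbours of $x$ and $y$. First I would fix notation: let $X = \{ z \in T : z \neq x, z \neq y, z \sim x, z \sim y \}$ be the set of common neighbours of $x$ and $y$ inside $T$. Since $T$ is an induced subgraph of the point graph $\Gamma$, all incidences I reason about are incidences in the generalised quadrangle itself.

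Consider first the case $x \sim y$. Then $x$ and $y$ are collinear, say on a unique line $\ell$. Take any $z \in X$. Then $z$ is collinear with both $x$ and $y$. If $z$ were not on $\ell$, then $z$ would be a point off $\ell$ collinear with two distinct points of $\ell$, contradicting the defining axiom of a generalised quadrangle (exactly one point of $\ell$ is collinear with a point not on $\ell$). Hence every $z \in X$ lies on $\ell$; but all points on a common line are pairwise collinear, so $X \cup \{x,y\}$ is a clique, and in particular the subgraph induced by $X$ is complete. (Equivalently, one can argue directly: if $z_1, z_2 \in X$ were non-adjacent, then $\{x, z_1, z_2\}$ would have two edges $x\sim z_1$, $x\sim z_2$ and the non-edge $z_1 \not\sim z_2$, so after adjoining — this is the route through $K_4-e$, but the line argument is cleaner.)

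Now consider the case $x \not\sim y$, and suppose for contradiction that some pair $z_1, z_2 \in X$ is adjacent in $T$. Then $\{x, y, z_1, z_2\}$ has edges $x\sim z_1$, $x \sim z_2$, $y \sim z_1$, $y \sim z_2$, $z_1 \sim z_2$, and the single non-edge $x \not\sim y$; this is precisely an induced $K_4 - e$ in $T$, contradicting Theorem~\ref{thm:k4-e} (applied via the preceding lemma that $T$ contains no induced $K_4-e$). Hence no two vertices of $X$ are adjacent, i.e.\ the subgraph induced by $X$ is empty. This completes both cases.

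I do not expect a genuine obstacle here: the statement is an immediate structural consequence of the absence of induced $K_4-e$ (for the non-adjacent case) and of the $GQ$ axiom forcing collinearity onto a single line (for the adjacent case). The only point requiring a little care is making sure the argument is phrased for $T$ as an \emph{induced} subgraph of $\Gamma$, so that ``adjacent in $T$'' and ``collinear in the quadrangle'' coincide for vertices of $T$; this is exactly what lets us invoke Theorem~\ref{thm:k4-e}.
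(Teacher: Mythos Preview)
Your proof is correct. For the case $x\not\sim y$ you argue exactly as the paper does, via the forbidden induced $K_4-e$ on $\{x,y,z_1,z_2\}$. For the case $x\sim y$, however, the paper does not invoke the GQ axiom directly but instead uses the same $K_4-e$ trick symmetrically: if $w,z\in X$ with $w\not\sim z$, then among $\{x,y,w,z\}$ exactly one of the pairs $(x,y),(w,z)$ is a non-edge, so the four vertices induce $K_4-e$. Your geometric argument (all common neighbours of two collinear points must lie on their line) is equally valid and in fact yields the slightly stronger conclusion that $X\cup\{x,y\}$ is contained in a single line; the paper's version has the advantage of treating both cases with one sentence and of depending only on the preceding $K_4-e$ lemma rather than re-invoking the quadrangle axiom.
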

\begin{proof}
Let $w$ and $z$ be two distinct vertices both adjacent to each of $x$ and
$y$. If exactly one of $(x,y), (w,z)$ is an edge in $T$, then these
four vertices induce a graph isomorphic to $K_4-e$, contradicting the
lemma above.
\end{proof}

Let $S$ be the subgraph obtained from $T$ by deleting $x$
and $y$, and all edges incident with either $x$ or $y$, see Figure
\ref{figure:S_T}. Then we have
\begin{itemize}
\item $S$ has order $t_0-2$;
\item $S$ has minimal valency at least 2;
\item The vertices of valency 2 in $S$ induce either a complete graph
  or an empty graph.
\item For any maximal clique $C$ in $S$ and any vertex $z$ not in $C$,
  $z$ has at most one neighbour in $C$.
\end{itemize}
\begin{figure}
\begin{center}
\input{S_T.pstex_t}
\caption{The graphs $S$ and $T$}
\label{figure:S_T}
\end{center}
\end{figure}

We now look at the possible subgraphs induced by $S$. In
Subsection~\ref{sec:complete}, we look at the case that $S$ induces a
complete graph. In Subsection~\ref{sec:not-complete}, we consider the
other case.

\subsection{$S$ is complete}\label{sec:complete}

We first consider the case that $S$ induces a complete graph in
$T$. In this case, we can determine the number of graphs of type $T$,
independent of the size of $S$.

\begin{theorem}
Let $\Gamma$ be the point graph of a $GQ(s, s^2)$. Let $(T,x, y)$
be a graph type of arbitrary size $t_0$, such that the additional
vertices induce a complete graph $S$ in $T$. Then the number of graphs
of type $(T, x_0, y_0)$ with respect to vertices $x$ and $y$ does not
depend on the choice of $x$ and $y$ in $\Gamma$.
\end{theorem}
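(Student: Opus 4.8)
The plan is to reduce the problem to counting cliques in the point graph $\Gamma$, exploiting the fact that a complete subgraph $S$ on the additional vertices must, by $3$-isoregularity of $\Gamma$ (Corollary \ref{thm:gq-isoregular}) and Cameron's Theorem \ref{thm:k4-e}, be contained in a single line of the generalised quadrangle. The key structural observation is that the additional vertices $a_1,\dots,a_{t_0-2}$ lie on one line $\ell$, and each of the fixed vertices $x,y$ is either on $\ell$, or off $\ell$ with a unique neighbour on $\ell$, or off $\ell$ with no neighbour on $\ell$ — but this last case is impossible once the additional set is nonempty, since every point is collinear with some point of any line it misses only if\dots actually the $GQ$ axiom forces exactly one neighbour on $\ell$ for any point not on $\ell$. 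So the position of $x$ (resp.\ $y$) relative to $\ell$ is: either $x\in\ell$, or $x\notin\ell$ with a distinguished point $\pi_\ell(x)\in\ell$.

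First I would split into cases according to which of $x,y$ are forced onto $\ell$ by their adjacency pattern in $T$ to the additional vertices. Concretely: if $x$ is adjacent in $T$ to at least one additional vertex $a_i$ but non-adjacent to another $a_j$, then $\{x,a_i,a_j\}$ together with a third additional vertex (or directly) gives a constraint; more cleanly, if $x$ is adjacent to $\ge 2$ additional vertices then $x$ together with two of them and a third would give $K_4-e$ unless $x\in\ell$, so $x\in\ell$; if $x$ is adjacent to exactly one additional vertex and $t_0-2\ge 2$, again we get $K_4-e$ from $x$, that neighbour, and two further points of $\ell$ — so $x$ must lie on $\ell$ in that situation too; if $x$ is adjacent to no additional vertex, then $x\notin\ell$ and $x$ is at "distance $2$" from $\ell$, contributing the unique collinear point $\pi_\ell(x)$, which must coincide with whichever additional vertices are forbidden... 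Let me instead organise it as: the additional vertices are $s-2$-ish many points chosen on a line, and the adjacency requirements to $x$ and to $y$ translate into requiring each chosen point to avoid, or to equal, the canonical projection points $\pi_\ell(x)$ and $\pi_\ell(y)$. Then the count becomes: (number of lines $\ell$ in a given position relative to $\{x,y\}$) times (number of ways to choose the additional vertices on $\ell$ subject to the avoid/include constraints), and both factors depend only on whether $x\sim y$, by the $GQ$ parameters.

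The concrete steps, in order: (1) Observe $S$ complete $\Rightarrow$ by Corollary \ref{thm:gq-isoregular} the additional vertices all lie on a unique common line $\ell$ (a clique of size $\ge 2$ determines its line; size $1$ is the degenerate case where $S$ is a single vertex, which needs separate but easy handling; size $0$ means $t_0=2$, trivial). (2) For each of $x,y$, determine from the isomorphism type $T$ whether it lies on $\ell$ or off it; show this is determined by $T$ and that, because additional vertices have valency $\ge 4$ and $K_4-e$ is forbidden, the only consistent configurations are a short explicit list. (3) For each configuration, the number of admissible lines $\ell$ through / near $x$ and $y$ is a function of the $GQ$ parameters $(s,s^2)$ and of whether $x\sim y$ only — e.g.\ the number of lines through both $x$ and $y$ when $x\sim y$ is $1$; the number of lines through $x$ missing $y$ is $t+1$ or $t$; the number of lines through neither, on which both $\pi_\ell(x)$ and $\pi_\ell(y)$ are defined and distinct, is again a parameter-only count, using $\mu=t+1$ for the non-adjacent case. (4) Given $\ell$, the additional points are chosen among the $s+1$ points of $\ell$, excluding the finitely many points that are forced (they must equal some $\pi_\ell(\cdot)$) or forbidden (they must differ from $\pi_\ell(\cdot)$, or from $x$, $y$ themselves); the number of such choices is a binomial coefficient in $s$ depending only on the configuration. (5) Multiply and conclude the product depends only on $x\sim y$ versus $x\not\sim y$.

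The main obstacle I anticipate is the bookkeeping in step (2)–(4): carefully enumerating which adjacency patterns of $x$ and $y$ to the additional vertices are actually realisable (many are killed by $K_4-e$ or by the valency-$\ge 4$ hypothesis on additional vertices), and then, for the surviving patterns, correctly counting the lines and the on-line choices without double counting or off-by-one errors in the binomials — particularly handling whether $\pi_\ell(x)$ and $\pi_\ell(y)$ are distinct, equal, or coincide with an endpoint, and the small-$S$ degenerate cases. The conceptual content is entirely in step (1); everything after is a finite case analysis whose only subtlety is ensuring every count is expressed purely in terms of $s$, $t=s^2$, and the edge/non-edge status of $(x,y)$, which is exactly what Corollary \ref{thm:gq-isoregular} and the $GQ$ regularity axioms guarantee.
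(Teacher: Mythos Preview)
Your overall plan is the same as the paper's: put the clique $S$ on a line $\ell$, classify the positions of $x$ and $y$ relative to $\ell$, and for each configuration count lines and then on-line subsets. That is exactly how the paper proceeds, organising the case split by the pair $(d_x,d_y)$ where $d_x=|T(x)\cap S|\in\{0,1,t_0-2\}$, and similarly for $y$.

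Two corrections, though. First, your claim that ``$x$ adjacent to exactly one additional vertex'' forces a $K_4-e$ is false: if $x\sim a_i$ and $x\not\sim a_j,a_k$ with $a_i,a_j,a_k\in S$, the induced subgraph on $\{x,a_i,a_j,a_k\}$ has only four edges (a triangle with a pendant), not five. The case $d_x=1$ is perfectly legitimate---it corresponds to $x\notin\ell$ with $\pi_\ell(x)\in S$---and the paper treats it explicitly. You seem to sense this and switch tack (``Let me instead organise it as\dots''), and your later step~(4) with ``forced'' versus ``forbidden'' projection points does handle it correctly; just make sure your earlier error does not propagate into the case list. Second, the hypothesis ``additional vertices have valency $\ge 4$'' is not part of this theorem's statement and the paper's proof does not use it: the theorem is proved for \emph{arbitrary} $T$ with $S$ complete, and all nine $(d_x,d_y)$ configurations are counted directly from the $GQ$ axioms. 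Drop that assumption; it is neither needed nor available here.
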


\begin{proof}
If $S$ is a complete graph, then $S$ is contained in a line, say
$l$. Either $x\in l$ and thus is collinear to all $t_0-2$ points in
$S$, or it has at most one neighbour in $S$. The same holds for
$y$. Let $d_x=|T(x)\cap S|$ be the number of neighbours of $x$ in $S$,
and similar $d_y=|T(y)\cap S|$. We have that $d_x,
d_y\in\{0,1,t_0-2\}$, and without loss of generality we may assume that
  $d_x\ge d_y$. We get six possibilities for the pair $(d_x, d_y)$,
and in each case we can determine the number of graphs of type $T$
with respect to any pair $(x,y)$ of distinct vertices in $\Gamma$.

\begin{description}
\item[$(d_x, d_y) = (t_0-2,t_0-2)$]:

  \begin{center}
    \begin{picture}(0,0)%
\includegraphics{line1.pstex}%
\end{picture}%
\setlength{\unitlength}{3947sp}%
\begingroup\makeatletter\ifx\SetFigFont\undefined%
\gdef\SetFigFont#1#2#3#4#5{%
  \reset@font\fontsize{#1}{#2pt}%
  \fontfamily{#3}\fontseries{#4}\fontshape{#5}%
  \selectfont}%
\fi\endgroup%
\begin{picture}(2266,166)(818,456)
\end{picture}

  \end{center}
  In this case, both $x$ and $y$ are contained in
  $l$; in particular, $x\sim y$. Thus there are
  $\binom{s-1}{|S|}$ choices   for the additional vertices.
\item[$(d_x, d_y) =(t_0-2, 1)$]:
  Here, $x\in l$, $y\notin l$, and the unique
  neighbour $z$ of $y$ on $l$ lies in $S$. We have to distinguish the
  cases $x=z$ and $x\neq z$.
  
  \begin{center}
    \begin{picture}(0,0)%
\includegraphics{line8.pstex}%
\end{picture}%
\setlength{\unitlength}{3947sp}%
\begingroup\makeatletter\ifx\SetFigFont\undefined%
\gdef\SetFigFont#1#2#3#4#5{%
  \reset@font\fontsize{#1}{#2pt}%
  \fontfamily{#3}\fontseries{#4}\fontshape{#5}%
  \selectfont}%
\fi\endgroup%
\begin{picture}(2216,1066)(818,-444)
\end{picture}

  \end{center}
  If $x\sim y$, we can choose any line $l$ through $x$ but not through
  $y$, which gives $s^2$ possibilities. Then we need to choose a
  subset $S\subseteq l\setminus\{x\}$. Hence, altogether there are
  $s^2\binom{s}{t_0-2}$ such graphs.
  
  \begin{center}
    \begin{picture}(0,0)%
\includegraphics{line2.pstex}%
\end{picture}%
\setlength{\unitlength}{3947sp}%
\begingroup\makeatletter\ifx\SetFigFont\undefined%
\gdef\SetFigFont#1#2#3#4#5{%
  \reset@font\fontsize{#1}{#2pt}%
  \fontfamily{#3}\fontseries{#4}\fontshape{#5}%
  \selectfont}%
\fi\endgroup%
\begin{picture}(2216,781)(818,-159)
\end{picture}

  \end{center}
  If $x\neq z$, then $x\not\sim
  y$. In order to find such a graph, we have to first choose the line
  $l$ through $x$, which gives $s^2+1$ possibilities. The point $z\in l$
  is uniquely determined by $y$, and we need to choose the $t_0-3$
  remaining vertices of $S$ on $l$. Altogether, there are
  $(s^2+1)\binom{s-1}{t_0-3}$ such graphs.
\item[$(d_x, d_y) =(t_0-2,0)$]:
  Here, $x\in l$, and the unique neighbour $z$ of $y$ on
  $l$ does not lie in $S$. In particular, $x\not\sim y$.
  
  \begin{center}
    \begin{picture}(0,0)%
\includegraphics{line3.pstex}%
\end{picture}%
\setlength{\unitlength}{3947sp}%
\begingroup\makeatletter\ifx\SetFigFont\undefined%
\gdef\SetFigFont#1#2#3#4#5{%
  \reset@font\fontsize{#1}{#2pt}%
  \fontfamily{#3}\fontseries{#4}\fontshape{#5}%
  \selectfont}%
\fi\endgroup%
\begin{picture}(2216,781)(818,-159)
\end{picture}

  \end{center}
  We can choose any line $l$ through $x$, and then a
  subset $S\subseteq l$ avoiding both $x$ and the unique neighbour of
  $y$ on $l$. Thus, the total number of such graphs is
  $(s^2+1)\binom{s-1}{t_0-2}$. 
\item [$(d_x, d_y) =(1,1)$]:
  Neither $x$ nor $y$ lie on $l$, and their unique neighbours $z_x$ and
  $z_y$ lie in $S$. We need to distinguish four cases, according as
  $x\sim y$, and $z_x=z_y$.
  \begin{description}
  \item[ $x\not\sim y$, $z_x\neq z_y$]:
    
    \begin{center}
      \begin{picture}(0,0)%
\includegraphics{line4.pstex}%
\end{picture}%
\setlength{\unitlength}{3947sp}%
\begingroup\makeatletter\ifx\SetFigFont\undefined%
\gdef\SetFigFont#1#2#3#4#5{%
  \reset@font\fontsize{#1}{#2pt}%
  \fontfamily{#3}\fontseries{#4}\fontshape{#5}%
  \selectfont}%
\fi\endgroup%
\begin{picture}(2166,716)(868,-144)
\end{picture}

    \end{center}
    Choose any line $l_1$ through $x$ ($s^2+1$ possibilities). On
    $l_1$ we take any other point $z_x$ which is not adjacent to $y$
    ($s-1$ possibilities). Through $z_x$ we take any line 
    $l\neq l_1$ ($s^2$ choices). On $l$, $z_x$ and $z_y$ are fixed,
    so we have to choose $|S|-2$ additional points. Thus the total
    number of such graphs is
    \[
    s^2(s^2+1)(s-1)\binom{s-1}{|S|-2}.
    \]
  \item[ $x\sim y$, $z_x\neq z_y$]:

    \begin{center}
      \begin{picture}(0,0)%
\includegraphics{line11.pstex}%
\end{picture}%
\setlength{\unitlength}{3947sp}%
\begingroup\makeatletter\ifx\SetFigFont\undefined%
\gdef\SetFigFont#1#2#3#4#5{%
  \reset@font\fontsize{#1}{#2pt}%
  \fontfamily{#3}\fontseries{#4}\fontshape{#5}%
  \selectfont}%
\fi\endgroup%
\begin{picture}(2166,716)(868,-144)
\end{picture}

    \end{center}
    Take any line $l_1$ through $x$ which does not contain $y$ ($s^2$
    choices). Choose $z_x\neq x$ on $l_1$ ($s$ choices). By
    construction, $z_x\not\sim y$.  Take any line $l\neq l_1$ through
    $z_x$ ($s^2$ choices); this determines $z_y$.  Again, we need to
    choose the remaining $|S|-2$ points on $l$. The total number is
    \[
    s^5\binom{s-1}{|S|-2}.
    \]
  \item[$x\not\sim y$, $z_x= z_y$]:

    \begin{center}
      \begin{picture}(0,0)%
\includegraphics{line5.pstex}%
\end{picture}%
\setlength{\unitlength}{3947sp}%
\begingroup\makeatletter\ifx\SetFigFont\undefined%
\gdef\SetFigFont#1#2#3#4#5{%
  \reset@font\fontsize{#1}{#2pt}%
  \fontfamily{#3}\fontseries{#4}\fontshape{#5}%
  \selectfont}%
\fi\endgroup%
\begin{picture}(2166,716)(868,-144)
\end{picture}

    \end{center}
    We take any common neighbour $z$ of $x$ and $y$ ($\mu$
    choices). Through $z$ there are $s^2-1$ lines $l$ which do not contain
    $x$ or $y$. On $l$, we need to choose $|S|-1$ additional
    points. Hence, we get a total number of
    \[
    \mu(s^2-1)\binom{s}{|S|-1}.
    \]
  \item[ $x\sim y$, $z_x= z_y$]:

    \begin{center}
      \begin{picture}(0,0)%
\includegraphics{line12.pstex}%
\end{picture}%
\setlength{\unitlength}{3947sp}%
\begingroup\makeatletter\ifx\SetFigFont\undefined%
\gdef\SetFigFont#1#2#3#4#5{%
  \reset@font\fontsize{#1}{#2pt}%
  \fontfamily{#3}\fontseries{#4}\fontshape{#5}%
  \selectfont}%
\fi\endgroup%
\begin{picture}(2166,1316)(868,-744)
\end{picture}

    \end{center}
    Here, the points $x$, $y$, and $z_x$ are pairwise adjacent, hence
    they are collinear. Let $l_1$ be the line through $x$ and $y$. We
    can choose any third point $z_x$ on $l_1$ ($s-1$ possibilities) and
    any line $l\neq l_1$ through $z_x$ ($s^2$ possibilities). Finally,
    we need to take $|S|-1$ additional points on $l$. The total number
    is
    \[
    s^2(s-1)\binom{s}{|S|-1}.
    \]
  \end{description}
\item [$(d_x, d_y) =(1,0)$]:
  Again, we distinguish the cases $x\sim y$ and $x\not\sim y$.
  \begin{center}
    \begin{picture}(0,0)%
\includegraphics{line6.pstex}%
\end{picture}%
\setlength{\unitlength}{3947sp}%
\begingroup\makeatletter\ifx\SetFigFont\undefined%
\gdef\SetFigFont#1#2#3#4#5{%
  \reset@font\fontsize{#1}{#2pt}%
  \fontfamily{#3}\fontseries{#4}\fontshape{#5}%
  \selectfont}%
\fi\endgroup%
\begin{picture}(2166,716)(868,-144)
\end{picture}

  \end{center}
  If $x\not\sim y$, we can take any line $l_1$ through $x$, and choose
  $x\neq z_x\not\sim y$. Through $z_x$ we take any line $l\neq l_1$,
  and on $l$ we need $|S|-1$ additional points, avoiding the neighbour
  of $y$. The total number of choices is
  \[
  (s^2+1)(s-1)s^2\binom{s-1}{|S|-1}.
  \]

  \begin{center}
    \begin{picture}(0,0)%
\includegraphics{line9.pstex}%
\end{picture}%
\setlength{\unitlength}{3947sp}%
\begingroup\makeatletter\ifx\SetFigFont\undefined%
\gdef\SetFigFont#1#2#3#4#5{%
  \reset@font\fontsize{#1}{#2pt}%
  \fontfamily{#3}\fontseries{#4}\fontshape{#5}%
  \selectfont}%
\fi\endgroup%
\begin{picture}(2166,716)(868,-144)
\end{picture}

  \end{center}
  If $x\sim y$, we choose $x\in l_x \not\ni y$. On $l_x$ we take any
  point $z_x\neq x$. Through $z_x$ we take any line $l\neq l_x$, and
  on $l$ we need $|S|-1$ more points, none of which is adjacent to
  $y$. The total number is
  \[
  s^2\cdot s \cdot s^2\binom{s-1}{|S|-1}.
  \]
\item [$(d_x, d_y) =(0,0)$]:

  \begin{center}
    \begin{picture}(0,0)%
\includegraphics{line7.pstex}%
\end{picture}%
\setlength{\unitlength}{3947sp}%
\begingroup\makeatletter\ifx\SetFigFont\undefined%
\gdef\SetFigFont#1#2#3#4#5{%
  \reset@font\fontsize{#1}{#2pt}%
  \fontfamily{#3}\fontseries{#4}\fontshape{#5}%
  \selectfont}%
\fi\endgroup%
\begin{picture}(2166,716)(868,-144)
\end{picture}

  \end{center}
  If $x\not\sim y$, we can divide all lines in the GQ into three different
  parts. There are $2(s^2+1)$ lines which pass through either $x$ and
  $y$. On each of the remaining lines, both $x$ and $y$ have one
  common neighbour each. Let $L_1$ be the set of lines where these
  neighbours coincide, and $L_2$ the set of lines where the neighbours
  are distinct. 

  Now $x$ and $y$ have $\mu$ common neighbours. Through each such neighbour
  there are $s^2-1$ lines which do not pass through either $x$ or
  $y$. All these lines are distinct, since otherwise, $x$ would have
  two neighbours on one line. Hence, we get that $|L_1|=\mu(s^2-1)$,
  and thus $|L_2|=l-|L_1|-2(s^2+1)$, where $l$ is the total number of
  lines. 

  In order to obtain a graph depicted above, we take any line from
  $L_1\cup L_2$. On this line, we need to choose $|S|$ points avoiding
  the neighbours of $x$ and $y$. Thus, the total number of such graphs
  is
  \[
  |L_1|\binom{s}{|S|} + |L_2|\binom{s-1}{|S|}.
  \]
  
  \begin{center}
    \begin{picture}(0,0)%
\includegraphics{line10.pstex}%
\end{picture}%
\setlength{\unitlength}{3947sp}%
\begingroup\makeatletter\ifx\SetFigFont\undefined%
\gdef\SetFigFont#1#2#3#4#5{%
  \reset@font\fontsize{#1}{#2pt}%
  \fontfamily{#3}\fontseries{#4}\fontshape{#5}%
  \selectfont}%
\fi\endgroup%
\begin{picture}(2166,716)(868,-144)
\end{picture}

  \end{center}
  If $x\sim y$, we distinguish four types of lines: The line
  $l_0$ connecting $x$ and $y$; the set $L_1$ of lines intersecting
  $l_0$ in either $x$ or $y$; the set $L_2$ of lines intersecting
  $l_0$ in other points; and finally the set $L_3$ of lines skew to $l_0$.

  Clearly, $|L_1|=2s^2$.
  There are $s-1$ additional points on $l_0$; through each of them,
  there are $s^2$ lines distinct from $l_0$. All these lines are
  distinct, since otherwise, two lines would intersect in more than
  one point. Hence we get that $|L_2|=s^2(s-1)$, and hence,
  $|L_3|=l-1-|L_1| - |L_2|$. 

  Now, by a similar reasoning as above, we get that the total number
  of graphs is
  \[
  |L_2|\binom{s}{|S|} + |L_3|\binom{s-1}{|S|}.
  \]
  
\end{description}
This completes the proof.
\end{proof}

\subsection{$S$ is not complete}\label{sec:not-complete}

Recall that $S$ is a set of $t_0-2$ vertices in the graph $T$ of order
$t_0$. 

\begin{proposition}
$S$ does not contain a clique of size $t_0-3$.
\end{proposition}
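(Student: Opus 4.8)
The plan is to argue by contradiction, using only the four structural properties of $S$ that were just recorded together with the standing assumption of this subsection that $S$ is not complete. So suppose, for contradiction, that $S$ contains a clique $C$ with $|C| = t_0 - 3$. Since $S$ has order $t_0 - 2$, there is exactly one vertex of $S$ outside $C$; call it $w$, so that $V(S) = C \cup \{w\}$ and every neighbour of $w$ in $S$ lies in $C$.

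The first step is to upgrade $C$ to a \emph{maximal} clique of $S$. Let $C'$ be a maximal clique of $S$ with $C \subseteq C'$. Then $C \subseteq C' \subseteq V(S) = C \cup \{w\}$, so either $C' = C$ or $C' = V(S)$. The latter would make $V(S)$ a clique, i.e.\ $S$ complete, contrary to the hypothesis of this subsection; hence $C' = C$, and $C$ is a maximal clique of $S$.

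The second step is to apply the fourth listed property of $S$ — that a vertex lying outside a maximal clique has at most one neighbour in it — to the maximal clique $C$ and the vertex $w \notin C$. This yields that $w$ has at most one neighbour in $C$, and since all neighbours of $w$ in $S$ lie in $C$, the valency of $w$ in $S$ is at most $1$. This contradicts the second listed property, that $S$ has minimal valency at least $2$, and the proof is complete.

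I do not expect a genuine obstacle here: the whole argument is a short bookkeeping exercise on the vertex count $|V(S)| = t_0 - 2$. The only point needing a moment's care is the reduction to a maximal clique in the first step, which is precisely where the assumption that $S$ is not complete is used; everything else is immediate from the listed properties. (One may also note that $t_0 \ge 6$ by Theorem~\ref{thm:5vc-gq}, so that $|C| = t_0 - 3 \ge 3$ and the clique in question is non-degenerate, although the argument above does not rely on this.)
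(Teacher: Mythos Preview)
Your proof is correct and follows essentially the same line as the paper's: a single vertex outside the clique would need at least two neighbours in it by the minimum-valency condition, contradicting the at-most-one-neighbour-in-a-maximal-clique property. The paper compresses this into one sentence and implicitly assumes the maximality of $C$, whereas you spell out explicitly why $C$ must already be maximal (using the non-completeness hypothesis of the subsection); this is a worthwhile clarification but not a different argument.
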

\begin{proof}
If $S$ contains only one point $z$ not contained in a maximal clique $C$,
then $z$ has to have two neighbours in $C$, which is  a contradiction.
\end{proof}

\begin{proposition}
$S$ does not contain a clique of size $t_0-4$.
\end{proposition}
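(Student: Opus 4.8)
The plan is to argue by contradiction, pushing the bookkeeping of the previous proposition one step further. Suppose $S$ does contain a clique of size $t_0-4$. Since $S$ contains no clique of size $t_0-3$, any such clique $C$ is automatically maximal, and $S\setminus C$ has exactly two vertices, say $z_1$ and $z_2$. By the last bulleted property of $S$ each $z_i$ has at most one neighbour in $C$, while by the second it has at least two neighbours in $S$; hence $z_i$ must be adjacent to $z_{3-i}$ and have a unique neighbour $c_i$ in $C$, so that $z_1\sim z_2$ and $z_1,z_2$ have valency exactly $2$ in $S$. (Note $|C|=t_0-4\ge 2$: since $\Gamma$ satisfies the $5$-vertex condition by Theorem~\ref{thm:5vc-gq}, we have $t_0\ge 6$.)

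Next I would transfer this to $T$. Every additional vertex of $T$ has valency at least $4$, but $z_i$ has only the two neighbours $z_{3-i}$ and $c_i$ inside $S$; hence $z_i$ is adjacent to both $x$ and $y$, so that $z_1,z_2$ lie in the set $X$ of common neighbours of $x$ and $y$ in $T$. Since $z_1\sim z_2$ the subgraph induced by $X$ is not empty, so Proposition~\ref{prop:complete-empty} forces $x\sim y$ and $X$ to be a clique. Then $X\cup\{x,y\}$ is a clique of $\Gamma$, hence lies on a single line $\ell_0$, so in particular $x,y,z_1,z_2\in\ell_0$. Moreover the line $\ell=\langle C\rangle$ (well defined since $|C|\ge 2$) is different from $\ell_0$: otherwise $z_1\in\ell_0=\ell$ would be collinear with all of $C$, contradicting that $c_1$ is its only neighbour in $C$. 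Finally, neither $x$ nor $y$ lies on $\ell$: if $x\in\ell$ then $x\sim c_1$, and together with $x\sim z_1$ and $c_1\sim z_1$ the triangle $\{x,c_1,z_1\}$ would lie on a common line, necessarily the line $\langle x,z_1\rangle=\ell_0$, whence $c_1\in\ell\cap\ell_0$ and so $c_1=x\notin S$, absurd. Thus each of $x,y$ has at most one neighbour on $\ell$.

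It remains to reach the contradiction, and here I would split on the size of $C$. If $t_0\le 7$, i.e. $|C|\in\{2,3\}$, a short check of the valencies in $S$ produces, in every subcase according to whether $c_1=c_2$, either a set of valency-two vertices of $S$ inducing neither a complete nor an empty graph (an edge together with one or two isolated vertices, or two disjoint edges), or a vertex of $S$ of valency $1$; either way this contradicts the properties of $S$. For larger $C$ one argues instead that $T$ cannot be a type whose multiplicity distinguishes edges: keeping $\ell_0$ fixed and distinguishing whether $c_1=c_2$ and how $\ell$ meets $\ell_0$ (skew, or in a single point), one counts the subgraphs of $\Gamma$ of type $T$ through an adjacent pair $(x,y)$ by the "choose a line, then choose the points on it" method of Subsection~\ref{sec:complete}, and finds the answer to be a polynomial in $s$ alone, contradicting the choice of $T$. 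The main obstacle is precisely this last step: one must enumerate correctly the handful of mutual positions of $\ell$, the line through $c_1$ (and $c_2$), and $\ell_0$, and verify uniformity of the count in each case — essentially the complete-$S$ argument specialised to these configurations, with the added care of tracking the pendant vertices $z_1,z_2$ on $\ell_0$.
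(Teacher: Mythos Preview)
Your approach is essentially the paper's: reduce to the situation where $T$ is the union of a $4$-clique $\{x,y,z_1,z_2\}$ on a line $\ell_0$ and a $(t_0-4)$-clique $C$ on a second line $\ell$, linked by the edges $z_ic_i$, and then count embeddings by first placing $z_1,z_2$ on $\ell_0$ and then choosing $\ell$ and the points of $C$ on it, splitting on whether $\ell$ meets $\ell_0$ and on which of the unique neighbours of $x,y$ on $\ell$ land inside $C$. The paper does not separate out small $t_0$; your direct contradiction for $|C|\le 3$ via the ``valency-$2$ vertices of $S$ induce a clique or coclique'' property works but is unnecessary, and one subcase description is slightly off (for $|C|=2$ with $c_1\ne c_2$ the valency-$2$ vertices of $S$ form a $4$-cycle, not two disjoint edges --- though the conclusion is unaffected).
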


\begin{proof}
Suppose that $S$ contains such a clique $C'$, and that it contains two more
vertices, $w$ and $z$. By the condition on the minimal valency, $w$
and $z$ are adjacent, and both have  exactly one neighbour in
$C'$. Moreover, in $T$, they are both adjacent to each of $x$ and $y$,
forcing $x\sim y$. Thus, in $T$, we have two cliques, $C'$ of size
$t_0-4$, and $C=\{x,y,w,z\}$ of size 4.

Let $l$ be the line in the generalised quadrangle containing $C$, and
let $l'$ be the line connecting $x$ and $y$. The latter is uniquely
determined. Denote the unique neighbours of $x,y,w,z$ on $l'$ by $x',
y', w', z'$ respectively. 

If $w'=z'$, then $l$ and $l'$ intersect, and $x'=y'=w'\in C'$. In this
case, we can count the graphs as follows: We select $w$ and $z$ on the
line connecting $x$ and $y$. The intersection point $x'$
has to lie on $l\setminus C$, which gives $s-3$ choices. There are $s^2$
other lines through $x'$; we choose one of them and then select
$t_0-5$ additional points on this line. Altogether,  we get
$\binom{s-1}{2} (s-3) s^2 \binom{s}{t_0-5}$ such graphs.

\begin{center}
\begin{picture}(0,0)%
\includegraphics{xywz.pstex}%
\end{picture}%
\setlength{\unitlength}{3947sp}%
\begingroup\makeatletter\ifx\SetFigFont\undefined%
\gdef\SetFigFont#1#2#3#4#5{%
  \reset@font\fontsize{#1}{#2pt}%
  \fontfamily{#3}\fontseries{#4}\fontshape{#5}%
  \selectfont}%
\fi\endgroup%
\begin{picture}(2891,1375)(518,-1001)
\end{picture}

\end{center}

If $w'\neq z'$, then $l$ and $l'$ do not intersect, and hence
$x',y',w',z'$ are all distinct. We choose the points $w$ and $z$ on
$l$, and a line $l'$ not intersecting $l$. The points $w'$ and $z'$
are uniquely determined; we can choose the remaining points according
to whether or not $x'$ and $y'$ are in $C'$. In each case, we can
count easily how many graphs we obtain.

Thus, the number of graphs of type $T$ with respect to $x$ and $y$
does not depend on the choice of $x$ and $y$, which is a contradiction
to the choice of $T$.
\end{proof}

Collecting what we have so far, we get:
\begin{corollary}
  $S$ is a graph of order $t_0-2$ of minimal valency at least $2$ not containing a
  $t_0-4$ clique.
\end{corollary}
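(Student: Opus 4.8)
The plan is simply to assemble the facts already established about $S$ in this subsection and the preceding list. First I would recall the construction: $S$ is obtained from the order-$t_0$ graph $T$ by deleting the two fixed vertices $x,y$ together with all edges incident to them, so $S$ has exactly $t_0-2$ vertices. For the minimal-valency claim I would invoke the Proposition opening Subsection~\ref{sec:complete} (which, via the $3$-isoregularity of $\Gamma$ from Corollary~\ref{thm:gq-isoregular} together with Corollary~\ref{cor:t-vertex}, shows that every additional vertex of $T$ has valency at least $4$ in $T$): since the only vertices deleted are $x$ and $y$, each additional vertex loses at most $2$ neighbours in passing from $T$ to $S$, hence has valency at least $2$ in $S$. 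This is exactly the second bullet in the list following Figure~\ref{figure:S_T}.

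Next I would invoke the two Propositions just proved in Subsection~\ref{sec:not-complete}: $S$ contains no clique of size $t_0-3$, and $S$ contains no clique of size $t_0-4$. The corollary follows immediately, once one observes that the second statement already subsumes the first — a clique of size $t_0-3$ in $S$ would, after deleting a single vertex, contain a clique of size $t_0-4$. Thus ``$S$ has order $t_0-2$, minimal valency at least $2$, and contains no $(t_0-4)$-clique'' is precisely the conjunction of the facts in hand.

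Since the statement is a pure bookkeeping consequence of results proved immediately above, there is no genuine obstacle. The one point needing a moment's care is the direction of the clique-size monotonicity: absence of a \emph{large} clique does not in general force absence of a \emph{small} one, and here it is the reverse implication (no $(t_0-4)$-clique implies no $(t_0-3)$-clique) that is used. One should also note, as a sanity check, that $t_0$ is large enough for $t_0-2$, $t_0-3$, $t_0-4$ to be non-negative and the statement non-vacuous, since the earlier arguments have already disposed of all small values of $t_0$.
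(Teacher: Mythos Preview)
Your proposal is correct and matches the paper's approach: the paper introduces the corollary with nothing more than ``Collecting what we have so far, we get,'' and gives no separate proof, so your assembly of the order, minimal-valency, and clique-exclusion facts is exactly what is intended. One minor slip: the proposition guaranteeing that every additional vertex of $T$ has valency at least $4$ appears in the preliminaries at the start of Section~7, not at the opening of the subsection on the complete case.
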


\begin{corollary}
$t_0\ge 7$.
\end{corollary}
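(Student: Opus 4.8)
The plan is to derive the bound $t_0 \ge 7$ purely from the structural constraints on $S$ that we have already established, together with the two preceding propositions which rule out cliques of size $t_0-3$ and $t_0-4$ in $S$. Recall $S$ has order $t_0-2$, minimum valency at least $2$, and contains no induced $K_4 - e$ (inherited from $T$, hence from $\Gamma$). The strategy is to show that if $t_0 \le 6$, then $S$ is forced into a configuration that either violates one of these constraints or makes the count of graphs of type $T$ determined anyway (contradicting maximality of $T$). Since $t_0 \ge 4$ trivially (we need at least the two fixed vertices plus additional vertices of valency $\ge 4$, which already forces several vertices), the real work is to eliminate $t_0 = 4, 5, 6$ one at a time, or rather to handle the cases $t_0 - 2 \in \{2,3,4\}$ for $|S|$.

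First I would dispose of the small cases where $S$ is too small to host additional vertices of the required valency. If $t_0 \le 4$, then $S$ has at most $2$ vertices, but every additional vertex of $T$ has valency $\ge 4$ in $T$ and at most $2$ neighbours among $\{x,y\}$, so it needs at least $2$ neighbours inside $S$; with $|S| \le 2$ this is impossible (a vertex cannot be adjacent to itself), so $t_0 \ge 5$. For $t_0 = 5$, $S$ has $3$ vertices with minimum valency $\ge 2$, forcing $S$ to be a triangle $K_3$; but then the no-$(K_4-e)$ condition on $S$ together with Proposition~\ref{prop:complete-empty} already pins down $S$, and in fact a complete $S$ is covered by the theorem of Section~\ref{sec:complete}, which shows the count is independent of $x,y$ — contradiction. (Alternatively: $K_3$ is a clique of size $3 = t_0 - 2$, so certainly contains a clique of size $t_0 - 3 = 2$, and one checks the propositions apply.)

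The crux is $t_0 = 6$, so $|S| = 4$. Here $S$ has $4$ vertices, minimum valency $\ge 2$, contains no induced $K_4 - e$, and — by the two propositions just proved — contains no clique of size $t_0 - 3 = 3$ and no clique of size $t_0 - 4 = 2$. But "no clique of size $2$" means $S$ has no edges at all, contradicting minimum valency $\ge 2$. So $t_0 = 6$ is impossible and hence $t_0 \ge 7$. \textbf{The main obstacle} — and the reason this corollary is stated where it is — is simply to notice that the two preceding propositions, read for the value $t_0 = 6$, say "$S$ has no triangle and no edge," which is absurd for a graph on four vertices of minimum degree two; the earlier propositions have already done the heavy lifting, and the corollary is the bookkeeping that assembles them. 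I would present it in roughly this form:

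\begin{proof}
By the proposition on additional vertices, each vertex of $S$ has valency at least $2$ in $S$; in particular $S$ has at least three vertices and at least one edge, so $t_0 \ge 5$. Suppose $t_0 \le 6$, so that $S$ has at most four vertices. If $t_0 = 5$, then $S$ is a graph on three vertices of minimum valency at least $2$, hence $S = K_3$; this is a complete graph, and by the theorem of Subsection~\ref{sec:complete} the number of graphs of type $T$ with respect to $x$ and $y$ is independent of the choice of $x$ and $y$, contradicting the choice of $T$. If $t_0 = 6$, then $S$ has four vertices; by the preceding two propositions, $S$ contains no clique of size $t_0 - 4 = 2$, that is, $S$ has no edges, contradicting the fact that $S$ has minimum valency at least $2$. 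Hence $t_0 \ge 7$.
\end{proof}
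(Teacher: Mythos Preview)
Your proof is correct and uses essentially the same idea as the paper: the key observation is that $S$ has minimum valency at least $2$, hence contains an edge (a $2$-clique), while the preceding proposition says $S$ contains no $(t_0-4)$-clique, forcing $t_0-4>2$. The paper compresses this into two lines without splitting off the cases $t_0\le 5$ (which are already excluded since we are in the ``$S$ not complete'' subsection and $|S|=t_0-2\ge 3$ forces $S=K_3$ when $t_0=5$); your case-by-case treatment is a slightly more explicit version of the same argument.
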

\begin{proof}
By the minimal valency condition, $S$ contains an edge, i.e., a
2-clique. Thus $t_0-4>2$.
\end{proof}

We now consider how many vertices in $S$ can have valency $2$.

\begin{proposition}
If $x\sim y$, then there are at most three vertices in $S$ of valency $2$.
\end{proposition}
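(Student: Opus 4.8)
The plan is to show that every vertex of $S$ of valency $2$ is forced to be a common neighbour of $x$ and $y$ in $T$, and then to exploit the clique structure of such common neighbours under the hypothesis $x\sim y$.

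First I would record the following bookkeeping observation. Recall that $S$ is obtained from $T$ by deleting $x$, $y$ together with all edges meeting one of them, so for any additional vertex $z$ the valency of $z$ in $T$ equals the valency of $z$ in $S$ plus the number of vertices of $\{x,y\}$ adjacent to $z$ in $T$. By the proposition above guaranteeing that every additional vertex of $T$ has valency at least $4$, and since $|\{x,y\}| = 2$, a vertex $z$ of valency $2$ in $S$ must be adjacent to \emph{both} $x$ and $y$ in $T$. In other words, every valency-$2$ vertex of $S$ lies in the set $X$ of vertices of $T$ adjacent to both fixed vertices.

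Next I would apply Proposition~\ref{prop:complete-empty}: since $x\sim y$, the subgraph induced by $X$ (in $T$, hence also in $\Gamma$) is complete. Let $S_2$ denote the set of valency-$2$ vertices of $S$; by the previous paragraph $S_2\subseteq X$, so $S_2$ induces a clique, in particular a clique inside $S$. Therefore each $z\in S_2$ is adjacent in $S$ to all of the other $|S_2|-1$ members of $S_2$, whence $|S_2|-1\le 2$ and $|S_2|\le 3$, which is the assertion.

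I do not anticipate a real obstacle; the one mildly subtle point is the first step, namely noticing that a vertex with valency at least $4$ in $T$ but only $2$ in $S$ has its two remaining incidences forced onto $x$ and $y$. After that, Proposition~\ref{prop:complete-empty} finishes the argument in one line. A more geometric alternative would observe that the common neighbours of a collinear pair $x,y$ are precisely the remaining $s-1$ points of the line through them, so $S_2$ is collinear and hence a clique; but the combinatorial route above is shorter and avoids extra case analysis.
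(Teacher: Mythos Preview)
Your argument is correct and is precisely the paper's approach, just written out in full: the paper's one-line proof invokes Proposition~\ref{prop:complete-empty} to conclude that the valency-$2$ vertices of $S$ form a clique (this is also recorded among the listed properties of $S$), leaving implicit both the reason these vertices lie in $X$ and the final step $|S_2|-1\le 2$. You have supplied exactly those implicit steps.
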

\begin{proof}
In this case, the vertices of valency $2$ induce a complete graph by
Proposition~\ref{prop:complete-empty}.
\end{proof}

\begin{proposition}
If $x\not\sim y$, then there are at least two vertices with valency at
least $3$.
\end{proposition}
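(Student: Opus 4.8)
The plan is to argue by contradiction, using only the bound $t_0\ge 7$ (which guarantees $|V(S)|=t_0-2\ge 5$), the fact recorded earlier that $S$ has minimal valency at least $2$, and Proposition~\ref{prop:complete-empty}. Throughout, ``valency'' of a vertex of $S$ means its valency in $S$.

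First I would pin down which vertices of $S$ are adjacent to both fixed vertices. Since every additional vertex of $T$ has valency at least $4$ in $T$, and $S$ is $T$ with $x$, $y$ and their incident edges removed, any vertex $z$ of $S$ with valency exactly $2$ in $S$ must recover its two missing neighbours from $\{x,y\}$; hence $z\sim x$ and $z\sim y$ in $T$. So every such $z$ lies in the set $X$ of Proposition~\ref{prop:complete-empty}, and because $x\not\sim y$, that proposition tells us that the vertices of $S$ of valency $2$ form an \emph{independent} set in $S$.

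Next I would assume for contradiction that at most one vertex of $S$ has valency at least $3$. Together with the minimal-valency-$2$ condition, this forces at least $t_0-3\ge 4$ vertices of $S$ to have valency exactly $2$; call this set $I$, which by the previous step is independent in $S$. Then each vertex of $I$ has both of its (two, distinct) neighbours in $V(S)\setminus I$, a set of size at most $(t_0-2)-(t_0-3)=1$; this is impossible in a simple graph. (If $I=V(S)$ the contradiction is even more direct, since $S$ would be a nonempty graph on an independent vertex set.) Hence at least two vertices of $S$ have valency at least $3$.

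I do not anticipate a real obstacle here. The only substantive point is the observation in the first step that a valency-$2$ vertex of $S$ is forced to be joined to both $x$ and $y$, which is precisely what lets Proposition~\ref{prop:complete-empty} convert ``low valency in $S$'' into ``independent in $S$''; everything after that is a one-line pigeonhole count. The only thing to be careful about is to keep straight that the valency in question is valency within $S$, not within $T$ (where every additional vertex already has valency at least $4$).
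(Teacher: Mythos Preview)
Your proof is correct and rests on the same key observation as the paper's: by Proposition~\ref{prop:complete-empty}, the valency-$2$ vertices of $S$ are pairwise non-adjacent, so their neighbours must lie among the vertices of valency at least $3$. The paper phrases this as a one-line direct argument (any valency-$2$ vertex has two neighbours, and those neighbours are forced to have valency greater than $2$), whereas you package it as a contradiction via a pigeonhole count on $V(S)\setminus I$; the substance is the same.
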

\begin{proof}
In this case, the vertices of valency 2 induce an empty graph by
Proposition~\ref{prop:complete-empty}. Each
one has to have two neighbours, which necessarily have a valency
greater than $2$.
\end{proof}

\begin{proposition}
Let $x\not\sim y$. Let $z$ be a vertex of valency 3 in $S$. Then $z$
can be adjacent to at most one vertex of valency 2.
\end{proposition}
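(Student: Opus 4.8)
The plan is to derive a contradiction from the assumption that a valency-$3$ vertex $z$ of $S$ is adjacent in $S$ to two distinct valency-$2$ vertices, using the geometry of the generalised quadrangle together with the fact that, since $x\not\sim y$, the set $X$ of vertices adjacent to both $x$ and $y$ in $T$ induces an empty graph (Proposition~\ref{prop:complete-empty}).

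First I would do the bookkeeping. Any vertex $u$ of $S$ of valency $2$ has valency at least $4$ in $T$, hence is adjacent to at least two, and therefore to both, of $x$ and $y$; so every valency-$2$ vertex of $S$ lies in $X$. Suppose now, for contradiction, that $z$ (of valency $3$ in $S$) is adjacent in $S$ to two distinct valency-$2$ vertices $u$ and $v$. Then $u,v\in X$, and since $X$ induces an empty graph we get $u\not\sim v$. Next I would locate $z$ relative to $\{x,y\}$: since $z$ has valency $3$ in $S$ and at least $4$ in $T$, it is adjacent to at least one of $x,y$; it cannot be adjacent to both, since then $z\in X$ and $z\sim u$ with $u\in X$ would contradict emptiness of $X$. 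So $z$ is adjacent to exactly one of them, say $z\sim x$ and $z\not\sim y$, the other case being symmetric.

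The finish is a collinearity argument. Since $x\sim z$, $x\sim u$ and $z\sim u$, the triple $\{x,z,u\}$ is a clique and hence lies on a line $\ell$; likewise $\{x,z,v\}$ lies on a line $\ell'$. Both $\ell$ and $\ell'$ contain the two distinct points $x$ and $z$, so $\ell=\ell'$, whence $u$ and $v$ both lie on $\ell$ and are collinear, contradicting $u\not\sim v$. This contradiction proves the proposition. I do not expect a serious obstacle here: the only delicate point is the case split on how $z$ meets $\{x,y\}$, and both branches close immediately — the "both" branch via emptiness of $X$, and the "exactly one" branch because two triangles sharing an edge in a generalised quadrangle must lie on a common line. (Note that the third neighbour of $z$ in $S$ plays no role in the argument.)
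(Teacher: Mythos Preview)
Your proof is correct and follows essentially the same approach as the paper: both observe that valency-$2$ vertices of $S$ lie in $X$ and are mutually non-adjacent, that $z$ must be adjacent to (say) $x$, and then derive a contradiction from the four vertices $x,z,u,v$. The paper's version is slightly more economical: it does not bother to rule out $z\sim y$ (only $z\sim x$ is needed), and it finishes in one line by noting that $\{x,z,u,v\}$ induces a $K_4-e$, which is exactly the forbidden configuration underlying your two-triangles-on-a-common-line argument.
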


\begin{proof}
Let $z$ be a vertex of valency 3. In $T$ it has to be adjacent to at
least one of $x$ and $y$; let us assume that $z\sim x$. Let $v$ and
$w$ be two vertices of valency 2 adjacent to $z$; this implies
$v\not\sim w$. Both $v$ and $w$ have to be adjacent to $x$, which
implies that $\{x,z,v,w\}$ induces a $K_4-e$.
\end{proof}

\begin{proposition}
The case $t_0=7$ is impossible.
\end{proposition}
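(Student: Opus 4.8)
The plan is to push the structural constraints on $S$ derived above to their extreme in the case $t_0 = 7$: there $S$ is a graph on only five vertices, so the remaining possibilities can be listed and eliminated one by one.

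So suppose $t_0 = 7$. By the Corollary above, $S$ is a graph on $t_0 - 2 = 5$ vertices, of minimal valency at least $2$, containing no clique of size $t_0 - 4 = 3$; in other words, $S$ is triangle-free (hence also has no induced $K_4 - e$, automatically). The first step is to classify all such $S$. Minimal valency $2$ forces $S$ to be connected (a second component would require at least three more vertices) and to have at least five edges, while an elementary count shows that a triangle-free graph on five vertices has at most six edges. Running through the admissible degree sequences $(2,2,2,2,2)$ and $(3,3,2,2,2)$, one finds that $S$ must be either the pentagon $C_5$ or the complete bipartite graph $K_{2,3}$.

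It then remains to contradict the existence of such an $S$. If $x \sim y$: for $S = C_5$, all five vertices of $S$ have valency $2$, contradicting the proposition that at most three vertices of $S$ have valency $2$; for $S = K_{2,3}$, the three vertices of valency $2$ form an independent set and thus do not induce a complete graph, contradicting Proposition~\ref{prop:complete-empty}. If $x \not\sim y$: for $S = C_5$, no vertex of $S$ has valency at least $3$, contradicting the proposition that $S$ has at least two vertices of valency at least $3$; for $S = K_{2,3}$, each vertex of valency $3$ is adjacent to all three vertices of valency $2$, contradicting the proposition that a valency-$3$ vertex is adjacent to at most one vertex of valency $2$. In every case we reach a contradiction with the choice of $T$, so no graph type $T$ of order $7$ of the required kind can exist; that is, $t_0 \ne 7$.

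Since everything after the reduction to order five is a finite check, I do not anticipate a genuine difficulty; the only point that deserves care is the classification of the five-vertex graphs and making sure no triangle-free example of minimal valency at least $2$ is overlooked — which is routine given the bounds $5 \le |E(S)| \le 6$.
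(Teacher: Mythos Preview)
Your argument is correct and relies on the same propositions as the paper; the only difference is organizational. Where you first classify the admissible $S$ as $C_5$ or $K_{2,3}$ and then eliminate each in both the $x\sim y$ and $x\not\sim y$ cases, the paper argues directly that $S$ must contain a vertex of valency at least~$3$, deduces the $K_{2,3}$ structure from its neighbourhood, and invokes the same contradiction.
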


\begin{proof}
Assume that $t_0=7$, i.e., $S$ has $t_0-2=5$ vertices. Then $S$ does not
contain a 3-clique. Let $z$ be a vertex with maximal valency in
$S$. Then $z$ has at least 3 neighbours. These neighbours must be mutually
non-adjacent to avoid a 3-clique. Since they have valency at least 2,
they are all adjacent to one additional vertex. Since this accounts
for 5 vertices, there are no additional vertices or edges in $S$.

However, now $z$ has valency 3, and it is adjacent to three
non-adjacent vertices of valency 2, which is a contradiction.
\end{proof}

Thus we have proved Theorem~\ref{thm:gqss2}.

\begin{corollary}
The constant $t_0$ in Klin's Conjecture is at least 8.
\end{corollary}

\begin{proof}
There are generalised quadrangles of order $(s,s^2)$ for some prime
powers $s\ge 5$   with 
intransitive automorphism groups, see \cite{Pay90c}.
\end{proof}

\section{Towards the 8-vertex condition}

For the 8-vertex condition, we have to consider a 6-vertex graph $S$
satisfying all the conditions stated in the previous section. A
computer search has been performed, and the result is that there are 5
different graphs to consider: The complete bipartite graphs $K_{3,3}$
and $K_{4,2}$; the graph $K_{3,3}-e$ obtained by removing an edge
from the complete bipartite graph; the triangular prism $K_3\circ
K_2$, and the graph obtained from the prism by removing one edge from
the prism which is not contained in a 3-clique. These graphs are shown
in Figure \ref{tab:graphs8vc}.

\begin{figure}
\begin{center}
\input{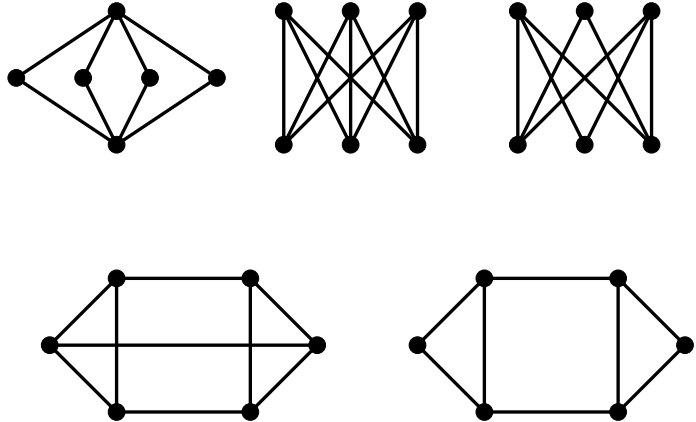}
\caption{Graphs relevant to the 8-vertex condition}
\label{tab:graphs8vc}
\end{center}
\end{figure}

It can be shown that the number of graphs of the types related to the
prism can be determined in a generalised quadrangle of order
$(s,s^2)$. However, for the bipartite graph types this is not the
case:

\begin{proposition}
There is a $GQ(5,25)$ whose point graph does not satisfy the
$8$-vertex condition. 
\end{proposition}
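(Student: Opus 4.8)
The plan is to exhibit a specific generalised quadrangle of order $(5,25)$ together with a pair of types from Figure~\ref{tab:graphs8vc} — concretely, one of the bipartite types $K_{3,3}$, $K_{4,2}$, or $K_{3,3}-e$ — and to show that the count of induced subgraphs of that type depends on the choice of non-adjacent pair $(x,y)$. By the preceding analysis, any failure of the $8$-vertex condition for a point graph of a $GQ(s,s^2)$ must be witnessed by such a type with $S$ bipartite, so it suffices to produce one quadrangle and one type where the relevant count is genuinely non-constant. I would take $\Gamma$ to be the point graph of a known $GQ(5,25)$ (for instance one of the $q^2$-order quadrangles $Q(5,q)$ or a suitable Payne-derived example with $q=5$), fix the bipartite type, say $K_{3,3}$ with the two fixed vertices $x,y$ attached to the parts appropriately (as in the figure), and compare the number of completions over two orbits of non-edges.

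The core of the argument is a counting reduction. Writing $S=K_{3,3}$ with parts $\{u_1,u_2,u_3\}$ and $\{v_1,v_2,v_3\}$, the condition that each $u_i$ and $v_j$ has valency $\ge 4$ in $T$ forces $x$ and $y$ to be collinear with prescribed subsets of $S$; one of the parts, say $\{u_1,u_2,u_3\}$, must be collinear with both $x$ and $y$, hence (by Proposition~\ref{prop:complete-empty}, since $x\not\sim y$) it must be an \emph{empty} set in $T$ — which it is, being one side of a bipartition. So the $u_i$ are three common neighbours of $x$ and $y$, i.e.\ three of the $\mu = s^2+1 = 26$ centers of the triad-or-pair $(x,y)$; and the $v_j$ are points collinear with all three $u_i$ but (in the required configuration) not with $x$ and $y$. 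Thus the count becomes: sum over unordered triples of centers $\{u_1,u_2,u_3\}$ of $x,y$, of the number of triples of "second-layer" points each collinear with all three $u_i$ and avoiding the forbidden adjacencies. The number of centers of $\{u_1,u_2,u_3\}$ itself depends on whether the $u_i$ are collinear or form a triad in the quadrangle, and when they form a triad the number of their centers is again $s+1=6$ — but which $6$, and how these interact with $x,y$, is a finer piece of incidence data that is \emph{not} determined by the abstract type. The whole point is that in a $GQ(s,s^2)$ the "triad graph" on centers need not be regular in the way a rank~$3$ group would force, and a suitable non-rank-$3$ example at $q=5$ exhibits two non-edges $(x,y)$ whose center-sets $X$ carry non-isomorphic induced structure.

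Hence the key steps, in order, are: (1) pin down the precise shape of the type $T$ built on $S=K_{3,3}$, using the valency-$\ge 4$ and $K_4-e$-free constraints together with Proposition~\ref{prop:complete-empty}, and reduce the count to a sum over triples of common neighbours of $x,y$; (2) re-express this sum in terms of the incidence structure on the $\mu$ centers of $(x,y)$ — how many such triples are collinear, how the center-sets of those triples meet the complement of $\{x,y\}$, etc.; (3) fix an explicit $GQ(5,25)$ (citing \cite{Pay90c} or the classical list) and exhibit two non-edges $(x,y)$ and $(x',y')$ for which the resulting numerical invariant differs, either by a direct combinatorial argument exploiting the intransitivity of $\mathrm{Aut}(\Gamma)$ on non-edges, or — entirely in keeping with the computational flavour of this section — by a finite computer calculation over the $\binom{756}{2}$ pairs of points. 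I expect step (2) to be the main obstacle: one must correctly account for all the adjacency conditions among $x,y$, the $u_i$, and the $v_j$ (in particular ensuring the $v_j$ really are non-adjacent to $x$ and $y$, so that the type is exactly $T$ and not a larger type), and organise the sum so that its dependence on the fine incidence data of the triad of centers is transparent; once that is done, verifying non-constancy on a concrete quadrangle — whether by hand on a well-chosen pair or by machine — is routine.
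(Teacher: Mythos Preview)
Your plan has two concrete problems that would prevent it from going through as written.

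First, the classical $Q(5,q)$ at $q=5$ is a rank~$3$ graph: its automorphism group is transitive on ordered edges and on ordered non-edges, so its point graph satisfies the $t$-vertex condition for every $t$ and cannot witness a failure. (``Payne-derived'' in the usual sense gives quadrangles of order $(s-1,s+1)$, not $(s,s^2)$, so that suggestion does not fit either.) You must pick a genuinely non-classical $GQ(5,25)$. The paper names a specific flock-type example, given by the matrix set
\[
\left\{\left(\begin{matrix} t & 3t^2 \\ 0 & 3t^3 \end{matrix}\right) \;:\; t\in GF(5)\right\},
\]
following the construction in \cite{PaT84}, \cite{Pay90c}.

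Second, your restriction to non-adjacent pairs $(x,y)$ is not forced by anything in the preceding analysis: the constraints on $S$ apply equally with $x\sim y$, and in the paper's actual verification it is \emph{edges} that are distinguished, by counting induced $K_{4,4}$'s containing a given edge --- equivalently, the type with $x\sim y$ and $S=K_{3,3}$. Your non-edge invariant may or may not also be non-constant on the right quadrangle; you have no a~priori guarantee.

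More broadly, the paper does not attempt the analytical reduction you sketch in steps~(1)--(2). The proposition is established purely by direct computer enumeration on the specific quadrangle named above, observing that different $\mathrm{Aut}(\Gamma)$-orbits of edges give different $K_{4,4}$-counts. Your reduction to incidence data among centers of triads is an interesting idea, but it is not needed here and (as you anticipate) the bookkeeping is delicate. If you want to match the paper, drop the reduction, take the flock $GQ(5,25)$ above, and compute the $K_{4,4}$-counts over all edges.
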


This result was obtained via computer search using known
generalised quadrangles. The quadrangle in question is obtained
using the set of matrices
\[
\left\{
\left(
\begin{matrix}
t&3t^2\\
0&3t^3
\end{matrix}
\right)\middle|
t\in GF(5)\right\},
\]
 see \cite{PaT84} and \cite{Pay90c} for details of
this construction. Its automorphism group has  several orbits on
edges,
which can be 
distinguished by counting complete bipartite graphs $K_{4,4}$
containing a given edge.

\section{Discussion}

\label{sec:history}

In this section we consider a number of topics which are naturally
related to the main part of the paper but which were not immediately
required for our presentation.

Nevertheless, we believe that a wider picture may be helpful for the
reader. Also credits should be given to many other researchers who
were dealing with the $t$-vertex condition, either explicitly or
implicitly. 

It is also worth mentioning that a first draft of the result given
above appeared in the authors thesis \cite{Rei03}, prepared at the
University of Delaware.

\subsection{More about strongly regular graphs}
\label{sub:srg}
The classical concept of a strongly regular graph (SRG) goes back to
the investigations of R.C.~Bose in relation to the design of statistical
experiments. For more than two decades it was considered in terms of
partially balanced incomplete block designs, while the term itself was
coined in~\cite{Bos63}.

There are a number of necessary conditions for the parameters $(v, k,
\lambda, \mu)$ of a putative SRG, which are formulated by means of
standard (or slightly more sophisticated) tools from spectral graph
theory (cf.~\cite{BroH12}) and which are called {\em feasibility
conditions.}

There are many known infinite classes and series of SRG's, in
particular those coming from finite permutation groups and finite
geometries.

There is a continuous interest to know all SRG's (up to isomorphism)
for a given feasible parameter set. A lot of such information is
accumulated on the home page of Andries Brouwer~\cite{Bro:www}.

On the other hand there exist so-called {\em prolific constructions}\/
of SRG's (in the sense of~\cite{CamS02}); this usually means that for
a given infinite series of feasible parameter sets the number of SRG's
grows exponentially. Techniques to present such prolific constructions
were suggested by W.D.~Wallis and developed quite essentially by
D.~Fon-Der-Flaass and M.~Muzychuk, see \cite{Muz07}.

The discovery of prolific constructions put a final dot in the
understanding of the fact that the full classification of SRG's is a
hopeless problem.

One who is interested in ``nice'' SRG's is forced to rigorously
formulate additional requirements to the considered objects, using suitable
group-theoretical or combinatorial language.

\subsection{$k$-isoregular graphs}
\label{sub:isoregular}

The concept of a $k$-isoregular graph has two independent origins,
both related to the investigation of rank~3 permutation groups. The
first origin is the Ph.D.~thesis of J.M.J.~Buczak (1980), fulfilled at
Oxford. The main results of this thesis were briefly mentioned in
Section~8 of \cite{CamL91}, while the text itself was not available to
M.~Klin and his colleagues for three decades.

The other origin is the paper \cite{GolK78}, where all absolutely
homogeneous graphs were classified and first steps were taken towards
the investigation of $k$-homo\-geneous graphs for $k\ge 2$; note that
2-homogeneous graphs are exactly rank~3 graphs.

It should be mentioned that the investigation of absolutely
homogeneous graphs was an attractive goal for many other
researchers. More or less at the same time the full classification was
suggested in publications by G.~Cherlin, A.D.~Gardiner, H.~Enomoto,
Ch.~Ronse, J.~Sheehan and others, see, e.g., the detailed
bibliography in the book \cite{Che98}. 

It was Ja.~Gol'fand who first realized that the concept of a
$k$-homogeneous graph may be approximated in purely combinatorial
terms bz $k$-isoregular graphs. In particular, absolutely regular graphs
coincide with the absolutely homogeneous graphs, and the proof of this
fact may be achieved without any use of group-theoretical
arguments. An outline of the proof (due to Gol'fand) is presented in
the Section~4.3 of \cite{KliPR88}.

In fact, the history of this proof is just a small visible part of the
related drama of ideas. Around 1979-80 Gol'fand prepared a manuscript
with the claim that each 5-regular graph is absolutely
regular. Unfortunately a fatal mistake (discovered by M.~Klin)
appeared on the very last page of this detailed text. However, all
previous results in this ingenious text were correct, thus the real
claim obtained by Gol'fand was the description of two putative
infinite classes of parameter sets for 4-regular graphs. One class
corresponds to absolutely regular graphs, while the other class,
denoted $M(r)$ by Gol'fand, was presenting an infinite series of
possible parameters. The graph $M(1)$ in this series corresponds to the
famous Schl{\"a}fli graph on 27 vertices, while $M(2)$ is the unique
McLaughlin graph with the parameters $(275, 112, 30, 56)$. The
existence of the graphs $M(r)$ for $r\ge 3$ remained open. (A brief
introduction about this series is also available in \cite{KliPR88}.)

Klin and his colleagues made a lot of efforts to convince Gol'fand to
publish his brilliant result (without the final page), however they
never succeeded.

Gol'fand spent many years in attempts to reach the full desired
result. After the collapse of the USSR he lived in relative
poverty. At the end of the century he was killed in his apartment
under unclear circumstances.

Around 1995, when he was still alive, Klin and Woldar established an
attempt to publish a series of papers, starting from Gol'fand's text as
Part I and with \cite{KliW} as Part II. Unfortunately after Gol'fand's
death the fate of his heritage still remains indefinite, this is one
of the reasons why the work over the unfinished text \cite{KliW} was
conserved. 

It should be mentioned that the original term $k$-regular suggested by
Gol'fand lead to possible confusion. Indeed, for many researchers in
graph theory, $k$-regular means regular of valency $k$. This is why
Klin and Woldar in \cite{KliW} decided to use the new term {\em
  $k$-isoregular}, which does not imply any confusion.

We refer to \cite{Rei00} for all necessary precise formulations
related to the concept of $k$-isoregularity.

One more approach, influenced by the consideration of graphs with the
$t$-vertex condition is developed in publications by J.~Wojdylo, see
\cite{Woj98} for example.

\subsection{Smith graphs}
\label{sub:smith}

In 1975 a few papers were published by M.~Smith, see for example
\cite{Smi75}, devoted to rank 3 permutation groups with the property
that each of the two transitive subconstituents is also a rank 3
group. All possible feasible parameters for the corresponding SRG's
were presented in evident form. At that time the result was regarded
as a program for further attacks toward a better understanding of such
graphs. A few years later, with the announcement of the classification
of finite simple groups, this research program became obsolete.

Since that time any SRG with the parameters described by Smith is
called a {\em Smith graph\/}, be it rank 3 or not. A great
significance of Smith graphs in the current context follows from the
the paper \cite{CamGS78}, in which those SRG's were investigated for
which for each vertex $x$ both induced subgraphs $\Gamma_1(x)$ and
$\Gamma_2(x)$ are also SRG's. It is easy to check that this class of
graphs strictly coincides with 3-isoregular graphs. (Note that the
complete graph and the empty graph may be regarded as degenerate
SRG's.)

The main result in \cite{CamGS78} claims that each 3-isoregular graph
is either the pentagon, a pseudo Latin square or negative Latin square
graph, or (up to complement) a Smith graph. Another significant
ingredient of \cite{CamGS78} is the established link of the class of
3-isoregular graphs with extremal properties of the Krein parameters
of SRG's and spherical $t$-designs (this link will be briefly
mentioned in Section~\ref{sub:interactions}). 

We should also mention Section 8 of \cite{CamGS78} which deals with
generalised quadrangles of order $(q,q^2)$. It was proved there that such
geometrical structures coexist with orthogonal arrays of strength $3$
and order $q$, as well as with certain codes over an alphabet with $q$
letters. This coexistence creates an additional challenge for the
reader to try to obtain our main result in absolutely different
context, relying only on the geometrical analysis of the related
codes.

We wish also to mention that 3-isoregular graphs are frequently called
triply regular graphs, which from time to time are subject of ongoing
research, see \cite{Guo11}. Hopefully this section of our paper will
help the modern investigators to better comprehend all the facets of
these striking combinatorial objects.

The concept of a 3-isoregular graph can be generalised to an arbitrary
symmetric association scheme. In this more general approach links with
spherical designs are well visible, see \cite{Sud10}.

\subsection{The $4$-vertex condition}
\label{sub:4-vertex}

Here we provide more details regarding the consideration of the
$4$-vertex condition.

The paper \cite{Iva94} (the last research input by Andrei Ivanov
before his transfer to the software industry) introduces two infinite
families of graphs on $2^{2n}$ vertices via the use of suitable finite
geometries. It also contains a rigorous proof of a folklore claim that
in the definition of the $t$-vertex condition for a regular graph it
is enough to check its fulfillment just on edges and non-edges (that
is, the inspection of loops is redundant).

Note that \cite{Iva94} contains interesting conjectures about the
induces subgraphs of his graphs which still remain unnoticed by modern
researchers (although one of the conjectures was confirmed in
\cite{Rei00}). 

Besides the well-known paper \cite{HesH71}, Higman himself was
considering graphs with the $4$-vertex condition in (at least) one
more paper \cite{Hig71}. In particular, he considered the block graphs
of Steiner triple systems with $m$ vertices (briefly $STS(m)$), which
are known to be SRG's. Highman proved that the block graph satisfies
the $4$-vertex condition if the $STS(m)$ consists of the points and
lines in projective space $PG(n,2)$ over the field of two elements,
here, $m=2^n-1$. Besides this, the $4$-condition may be satisfied for
$STS(m)$, for $m\in\{9,13,25\}$.

While the cases $m=9$ and $m=13$ can be easily settled, the case
$m=25$ remained open for four decades. Following the advice of
M.~Klin, P.~Kaski et al obtained the negative answer (see
\cite{KasKO12}), essentially relying on the use of computer and some
extra clever ad hoc tricks (the complete enumeration of all $STS(25)$
looks hopeless at the moment).

There are also known a few sporadic proper (non rank 3) SRG's which
satisfy the 4-condition. The smallest such graphs have 36 vertices, they
were carefully investigated in \cite{KliMRR05}. 

\subsection{Generalisations for association schemes}
\label{sub:generalisations}

Every SRG $\Gamma$ together with the complement $\overline \Gamma$
forms a symmetric association scheme with two classes (or of rank
3). Therefore it is natural to consider the concept of the $t$-vertex
condition for arbitrary association schemes.

Non-symmetric schemes with two classes are equivalent to pairs of
complementary doubly regular tournaments. Each such (skew-symmetric)
scheme satisfies the $4$-vertex condition \cite{Pas92}.

The situation for higher ranks is much more sophisticated and goes beyond
the scope of this survey. The first results in this direction were
presented in \cite{Fur87} and \cite{CaiFI92}, though without the
evident use of association schemes. Further steps stem from
\cite{EvdP99}. 

\subsection{Interactions with other problems}
\label{sub:interactions}

During the last two decades highly symmetrical assocaiation schemes,
and in particular SRG's, were used quite essentially in a number of
significant research approaches in diverse parts of mathematics. For
each such case it is not so easy to recognize immediately which
properties of the related graphs or schemes are exactly requested. The
reason of difficulties is as a rule a serious discrepancy in the
languages adopted in the corresponding parts of mathematics.

A few examples are briefly mentioned below, each together with at least one
striking reference.

Two decades ago F.~Jaeger demonstrated in \cite{Jae92} how one could
produce spin models for the applications in the theory of link
invariants (in the sense of V.R.F.~Jones), using suitable self-dual
association schemes. The requested SRG's should be 3-isoregular. 

The problem of description of finite point systems in Euclidean space
which satisfy some energy minimization criteria was attacked by
H. Cohn et al in a number of publications throughout the last
decade. Surprisingly, they established links of putative optimal
systems with some famous SRG's, among them there are a few
3-isoregular graphs, see \cite{CohK07}.

Deterministic polynomial factoring is considered in the framework of
$m$-schemes as they are called in \cite{IvaKS09}. In fact, this class
of objects is strictly related to diverse kinds of highly symmetrical
association schemes (as they were mentioned in
Section~\ref{sub:generalisations}). The recent thesis \cite{Aro13} is a
good source to digest sophisticated interactions, which appear in this
new line of research.

Spherical designs are intimately related to the existence of some
classes of SRG's, in particular to putative $3$- and $4$-isoregular
graphs. The survey \cite{CohK07} may be helpful as an initial
introduction. In this context, the negative results presented in
\cite{BanMV04} imply the non-existence of infinitely many 4-isoregular
graphs. This is, in a sense, a partial fulfilment of the foremost
dream of the late Ja.~Gol'fand. It should be mentioned that such an
implication is not immediately visible for a non-perplexed reader.

\section{Conclusion}
We have shown that the point graphs of generalised quadrangles of
order $(s,s^2)$ satisfy the 7-vertex condition. This provides us with
an infinite family of strongly regular graphs satisfying that
condition which have intransitive automorphism groups.

Hence, if Klin's Conjecture holds (i.e., if there is a number $t_0$ such
that the $t_0$-vertex condition implies a rank 3 automorphism group) 
then $t_0\ge 8$.

Still, the proof of this conjecture in its full generality appears
intractable. If we restrict ourselves to point graphs of generalised
quadrangles, then it looks reasonable to prove a similar statement, in
particular since many characterization of the ``classical''
generalised quadrangles (which include those with a rank 3 group) are
known. 

\bibliographystyle{plain}
\bibliography{general}
\end{document}